\newcommand{\E}{\mathcal{E}}
\newcommand{\V}{\mathcal{V}}
\newcommand{\Z}{\mathbb{Z}}
\newcommand{\N}{\mathbb{N}}
\newcommand{\R}{\mathbb{R}}
\newcommand{\C}{\mathbb{C}}
\renewcommand{\L}{\mathsf{L}^2}
\newcommand{\W}{\mathsf{H}}
\newcommand{\h}{\mathfrak{h}}
\renewcommand{\H}{\mathcal{H}}
\newcommand{\hb}{{\mathbf{h}}}
\newcommand{\Hb}{{\mathbf{H}}}
\newcommand{\A}{\widetilde{A}}
\newcommand{\B}{\widetilde{B}}
\newcommand{\dom}{\mathrm{dom}}
\newcommand{\la}{\langle}
\newcommand{\ra}{\rangle}
\newcommand{\eps}{\varepsilon}
\newcommand{\e}{_{\varepsilon}}
\newcommand{\al}{\alpha}
\newcommand{\be}{\beta}
\newcommand{\ga}{\gamma}
\renewcommand{\u}{\mathbf{u}}
\renewcommand{\v}{\mathbf{v}}
\newcommand{\x}{\mathbf{x}}
\renewcommand{\d}{\mathrm{d}}
\newcommand{\ds}{\displaystyle}
\newcommand{\wt}{\widetilde}
\newcommand{\cupl}{\bigcup\limits}
\newcommand{\suml}{\sum\limits}
\theoremstyle{plain}
\newtheorem{theorem}{Theorem}[section]
\newtheorem*{theorem*}{Theorem}
\newtheorem{lemma}[theorem]{Lemma}
\newtheorem*{lemma*}{Lemma}
\theoremstyle{remark}
\newtheorem{remark}{Remark}[section]
\newtheorem*{remark*}{Remark} 
\theoremstyle{definition}
\numberwithin{equation}{section}
\begin{document}
\title[Periodic quantum graphs with predefined spectral gaps]{Periodic quantum graphs with predefined spectral gaps}
\author[Andrii Khrabustovskyi]{Andrii Khrabustovskyi\,$^{1,2}$}

\address{$^1$ Institute of Applied Mathematics, Graz University of Technology,
Austria}
\address{$^2$ Department of Physics, Faculty of Science, University of Hradec Králové,
 Czech Republic}
\email{khrabustovskyi@math.tugraz.at, andrii.khrabustovskyi@uhk.cz}

\begin{abstract}
Let $\Gamma$ be an arbitrary $\mathbb{Z}^n$-periodic metric graph, which does not coincide with a line. We consider  the Hamiltonian $\mathcal{H}_\varepsilon$ on $\Gamma$ with the action $-\varepsilon^{-1}{\mathrm{d}^2/\mathrm{d} x^2}$ 
on its edges; here  $\varepsilon>0$ is a small parameter. Let $m\in\mathbb{N}$. We show that under a proper choice of vertex conditions 
the spectrum  $\sigma(\mathcal{H}^\varepsilon)$ of $\mathcal{H}^\varepsilon$ has at least $m$ gaps as $\varepsilon$ is small enough.
We demonstrate that the asymptotic  behavior  of these gaps and  the asymptotic    behavior of the bottom of $\sigma(\mathcal{H}^\varepsilon)$  as $\varepsilon\to 0$ can be completely controlled through a suitable choice of coupling constants standing in those vertex conditions. We also show how to ensure for fixed (small enough) $\varepsilon$ the precise coincidence of the left endpoints of the first $m$ spectral gaps with predefined numbers.
\end{abstract}

\keywords{Periodic quantum graphs, spectral gaps, $\delta$-interactions, $\delta'$-interactions, control of spectrum}

\maketitle

\thispagestyle{empty}

\section*{Introduction}

Traditionally the name \emph{quantum graph} refers to a pair
$(\Gamma,\mathcal{H})$, where $\Gamma$ is a network-shaped
structure of vertices connected by edges of certain positive lengths  (\emph{metric graph}) and
$\mathcal{H}$ is a second order self-adjoint differential operator
on $\Gamma$ (\emph{Hamiltonian}). Hamiltonians are determined by differential
operations on the edges and certain interface conditions at the
vertices. We refer to the
monograph \cite{BK13}  for a broad overview and an extensive
bibliography on this topic.

Quantum graphs arise naturally in mathematics, physics,
chemistry and engineering as  simplified  models of wave propagation in
quasi-one-dimensional systems looking like  narrow neighborhoods
of  graphs. 
{Typical applications include} quantum wires \cite{KS99,KS03}, photonic crystals \cite{KK99,KK02}, graphene and carbon nanostructures \cite{KL07,KP07}, 
quantum chaos \cite{KoSm97,KoSm99} 
and many other areas. For more details concerning origins of quantum graphs see \cite{Ku02} and \cite[Chapter~7]{BK13}. 

In various applications (for example, to aforementioned graphene and carbon nano-structures, and photonic crystals) periodic infinite graphs are studied. 
In what follows in order to simplify the presentation (but without any loss of 
generality) 
we assume that our graphs
are embedded into $\mathbb{R}^d$ for some $d\in\N$. 
An infinite metric graph $\Gamma\subset \mathbb{R}^d$  is said to be  \textit{$\mathbb{Z}^n$-periodic} ($n\le d$) 
if it invariant under translations through some
 linearly
independent vectors $\nu_1,\dots,\nu_n\in\mathbb{R}^d$.
The Hamiltonian $\H$ on a $\mathbb{Z}^n$-periodic metric graph $\Gamma$ is said to be 
periodic if it commutes with these translations. 
 
It is well-known that  the spectrum of a
periodic Hamiltonian on a periodic metric graph can be represented as a locally finite union of
compact intervals (\textit{spectral bands}). The bounded open interval
is called a \textit{gap} if it has an empty intersection with the
spectrum, but its ends belong to it. 
The band structure of the spectrum suggests that gaps may exist
in principle. In general, however, the presence of gaps is not guaranteed: two spectral bands may 
overlap, and then the corresponding gap disappears. 
For instance, if $\Gamma$ is a
rectangular lattice, $\H$ is defined by the operation $-{\d ^2/ \d x^2}$ on the edges and the standard Kirchhoff conditions at
the vertices, then   $\sigma(\mathcal{H})$  has no gaps -- it coincides with $[0,\infty)$. 

Existence and locations of spectral
gaps are of primary interest because of
various applications, for example in physics of \emph{photonic crystals} -- periodic nanostructures, whose characteristic property is that the light waves at certain optical frequencies fail to propagate in them, which is caused by gaps in the spectrum of the Maxwell operator or related scalar operators.
For more details we refer  to \cite{KK99,KK02}, where periodic high contrast photonic and acoustic media are studied in  high contrast regimes leading to appearance of   Dirichlet-to-Neumann type operators on periodic graphs.  

To create spectral gaps one can use geometrical means.
For example, given a fixed graph we ``decorate'' it
changing its geometrical structure at each vertex:
either one attaches to each vertex a copy of certain fixed compact
graph \cite{Ku05} (see also  \cite{SA00} where similar idea was used for discrete graphs) or
in each vertex one disconnects the edges
emerging from it and then connects their loose endpoints by a
certain additional graph (``spider'') \cite{O06,DKO16}.
 
Another way to open spectral gaps is to use  ``advanced'' vertex conditions.
For example, as we already noted the spectrum of the  Kirchhoff Laplacian
on a  rectangular lattice has no gaps, however (see \cite{Ex96}) if we replace
 Kirchhoff conditions by the so-called $\delta$-conditions 
 of the strength
$\al\not=0$ one immediately gets infinitely many gaps
provided
the lattice-spacing ratio is a rational number. 
 
Further results on spectral gaps opening for periodic quantum graphs as
well as on various estimates on their location and lengths   can be found in \cite{AEL96,BGP07,EG96,ET10,ET17,KS16,LP08,KP07,KL07,N15,N16,N14,N19,BKo10,Ko08}.\smallskip

When designing materials with prescribed properties it is desirable not only to open up spectral gaps, but also   
be able to control their location and length -- via a suitable choice of operator coefficients or/and geometry of the medium.
We addressed this problem for various classes of periodic operators in a series of papers \cite{Kh12,Kh13,Kh14,BK15,EK18}.
In particular, periodic quantum graphs were treated in \cite{BK15}. 
In this paper the required structure for the spectrum is achieved 
via the combination of   two approaches described above: taking a fixed  
periodic graph $\Gamma_0$ we decorate it attaching to each period cell $m$
  compact graphs $Y_{ij}$; here $j=1,\dots,m$, while the subscript $i\in\Z^n$ indicates to which period cell we attach $Y_{ij}$ (see Figure~\ref{fig0}, here $m=2$). 
  On $\Gamma$ we considered the Hamiltonian $\H\e$ 
 defined by the operation\,\, $-\eps^{-1}{\d ^2/ \d x^2}$ on the edges and the Kirchhoff 
conditions in 
all its vertices except the points of attachment of $Y_{ij}$ to $\Gamma_0$ -- in 
these points we pose (a kind of) $\delta'$-conditions\footnote{For the definition of  $\delta$ and $\delta'$-conditions in the graph context see, e.g., \cite{Ex96}.}. Note, that the vertex conditions we dealt with in \cite{BK15} ``generate'' only
Hamiltonians with   $\inf(\sigma(\H\e))=0$.  
It was proven that 
 $\sigma(\H\e)$ has at least $m$ gaps for small enough $\eps$, these gaps converge (as $\eps\to 0$) to
some intervals $(A_j,B_j)\subset[0,\infty)$  whose location and lengths can be nicely controlled by  a suitable choice
of coupling constants standing in those $\delta'$-conditions and a suitable choice the ``sizes'' of attached graphs $Y_{ij}$. 

\begin{figure}[h]
\begin{center}
\begin{picture}(290,70)
\includegraphics{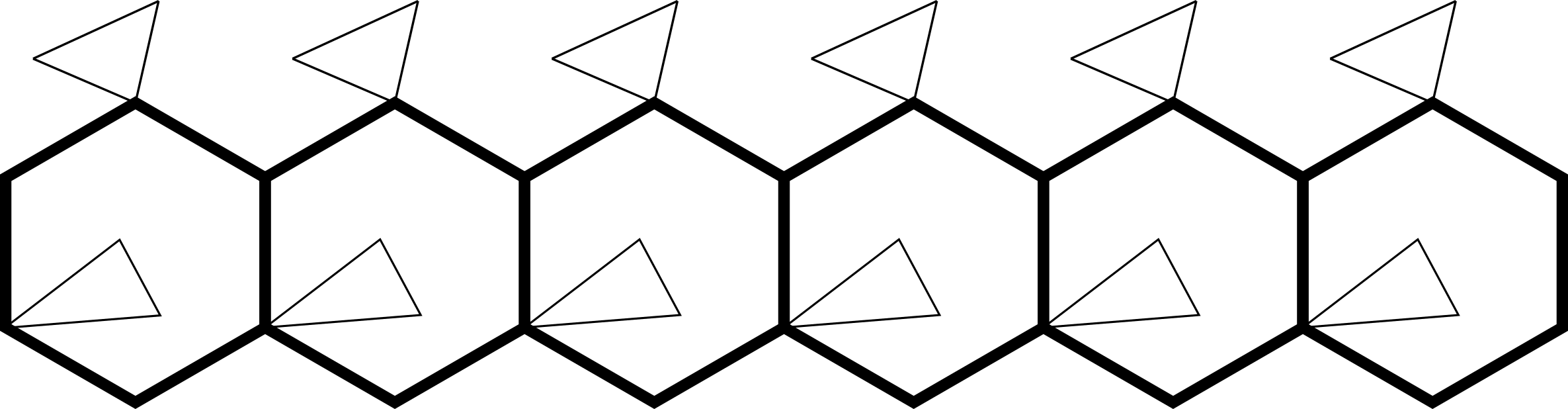}

\put(-3,50){$Y_{i1}$}
\put(-5,55){\vector(-3,1){18}}

\put(9,25){$Y_{i2}$}
\put(7,27){\vector(-3,-1){27.5}}

\put(-320,42){$\Gamma_0$}
\put(-308,41){\vector(3,-1){23}}

\end{picture}
\caption{Example of a periodic graph utilized in \cite{BK15}}
\label{fig0}
\end{center}
\end{figure}
 
In the current paper we continue the research
started in \cite{BK15}. \emph{We will prove that the required structure of the spectrum
can be achieved solely by an appropriate choice of vertex conditions without any assumptions 
on the graph geometry}. Namely, let $\Gamma$ be a $\Z^n$-periodic metric graph. The only assumption we impose on it is that $\Gamma$ does not coincide with a line. On $\Gamma$ we consider  the Hamiltonian   $\H\e$ 
 defined by the operation $-\eps^{-1}{\d ^2/ \d x^2}$ on edges and either Kirchhoff, $\delta$ or $\delta'$-type (different from those treated in \cite{BK15}) conditions at  vertices -- see  \eqref{Kirchhoff}-\eqref{delta'}. 
We prove that  $\sigma(\H\e)$ has at least $m$ gaps; when $\eps\to 0$ the first $m$ gaps (respectively, the infimum  of $\sigma(\H\e)$)
converge to some intervals $(A_j,B_j)\subset\R$, $j=1,\dots,m$ (respectively, to some number $B_0\in\R$); the location of   $A_j$, $j=1,\dots,m$ and $B_j$, $j=0,\dots,m $ depends in  explicit way
from  couplings constants standing in $\delta$ and $\delta'$-type vertex conditions; see Theorem~\ref{th1}. Moreover, 
choosing these coupling constants in a proper way one can completely control  $A_j$ and $B_j$ 
making them  
coincident with predefined numbers; see Theorem~\ref{th2}. 
Note, that in contrast to \cite{BK15}, the limiting intervals and the bottom of the spectrum 
do not necessary lie on the positive semi-axis.  
Finally  we show that for fixed (small enough) $\eps$
one can  guarantee the \emph{precise coincidence} of the left endpoints of the first $m$ gaps with prescribed numbers; see Theorem~\ref{th3}. 
 
The method we use to prove the convergence of spectra is different from the one used in \cite{BK15},
where we utilized Simon's result  \cite{Si78} about monotonic sequences of forms.
In the current work we apply the abstract lemma
from \cite{EP05} serving to compare eigenvalues of two self-adjoint operators acting in different Hilbert spaces.  
The advantage of this approach is that we are able not only to prove the convergence of spectra, but also to estimate the rate of convergence.

The structure of the paper is as follows.
In Section~\ref{sec1} we introduce the Hamiltonian $\H\e$ and formulate the main convergence
result. Its proof  is given in Section~\ref{sec2}.
In Section~\ref{sec3} we demonstrate how to control the location of spectral gaps.

\section{Setting of the problem and main result\label{sec1}}
\subsection{Metric graph $\Gamma$\label{subsec11}} 

Let $n\in\N$ and let $\Gamma $ be an arbitrary connected $\Z^n$-periodic locally finite metric graph. 
The only assumption we impose on the geometry of $\Gamma$ is that it does not coincide with a line 
(see the footnote $^{\text{\ref{foot1}}}$   explaining the role of this assumptions) and 
its fundamental domain is compact (see below).
W.l.o.g. (cf.~the discussion after Definition~4.1.1 in \cite{BK13}) one can assume that $\Gamma$ is embedded into  $\mathbb{R}^d$ with  $d=n$ as $n\ge 3$ and $d=3$ as $n=1,2$. We also assume that $\Gamma$ has no loops  -- otherwise one can break them into pieces by introducing a new
intermediate  vertex.

By $\E_\Gamma$ and $\V_\Gamma$ we denote  the sets  of   edges and  vertices of $\Gamma$, respectively. By  
$l = l(e)$ we denote the
function assigning to the edge $e$ its length $l(e)$. 
We assume that $l(e) < \infty$ 
for each  $e \in \E_\Gamma$. In a natural way we introduce on  each edge $e\in\E_\Gamma$ 
the local coordinate $x_e \in [0, l(e)]$, so that  $x_e=0$ and $x_e=l(e)$ correspond to the endpoints of $e$. 
For $v \in \V_\Gamma$ we denote by
$ \E(v)$ the set of edges emanating from $v$.
 
 The $\mathbb{Z}^n$-periodicity of $\Gamma$ means  that
$$\Gamma+\nu_k=\Gamma,\ k=1,\dots,n$$
for some linearly 
independent vectors $\nu_1,\dots,\nu_n\in\R^d$.
Let us introduce for $i=(i_1,\dots,i_n)\in\Z^n$   the mapping $i\cdot:\Gamma\to\Gamma$  defined by
\begin{gather}\label{idot}
i\cdot x=  x+\suml_{k=1}^n i_k \nu_k,\ x\in\Gamma.
\end{gather}
We denote by $Y$  a \textit{fundamental domain} of $\Gamma$, i.e. a compact  set (see the assumption above) 
satisfying
$$
\ds\cupl_{i\in\Z^n} {i \cdot Y} = \Gamma,\quad
\text{the sets }Y\text{ and }i\cdot Y
\text{ may have only finitely many common points as }i\not=0.
$$
Evidently   a fundamental domain in not uniquely defined.
Note that for any $r=(r_1 , \dots , r_n ) \in \N^n_0$ the graph $\Gamma$
is also invariant invariant under translations through vectors $\nu^r_1,\dots,\nu^r_n$ defined by
$\nu_k^r=r_k\nu_k$
The corresponding fundamental domain is the set $Y^r$ given by
\begin{gather}
\label{rescalled}
Y^r= \cupl_{i\in \mathcal{I}^r} {i \cdot Y} ,\text{ where }
\mathcal{I}^r=\left\{i\in\Z^n:\ i_k\in[0,r_k],\, k=1,\dots,n\right\}.
\end{gather}
Finally,  we denote by $\mathcal{U}_Y $ the set of  points of a fundamental domain $ {Y}$ that 
simultaneously belong to ``neighboring'' fundamental domains, i.e.
$$\mathcal{U}_Y=\left\{v\in  {Y}:\ \exists i\in\Z^n\setminus\{0\}\text{ such that }v\in {i\cdot Y}\right\}.$$

An example of a $\Z^2$-periodic graph is presented on Figure~\ref{fig1}(a). This is
an equilateral  hexagonal lattice in $\R^2$, which is  invariant under translations through  
vectors $\vec \nu_1=(\sqrt{3},0)$, $\vec \nu_2=(-{\sqrt{3}\over 2},{3\over 2})$. Its fundamental domain $Y$   is highlighted in bold lines. On Figure~\ref{fig1}(b) one sees  the  fundamental domain $Y^r$ \eqref{rescalled} for $r=(2,2)$. On this figures the bold   dots  are vertices belonging to 
$\mathcal{U}_{Y }$ and $\mathcal{U}_{Y^r}$, respectively.  

\begin{figure}[h]
\begin{center}
\begin{picture}(440,85)
\includegraphics[height=33mm]{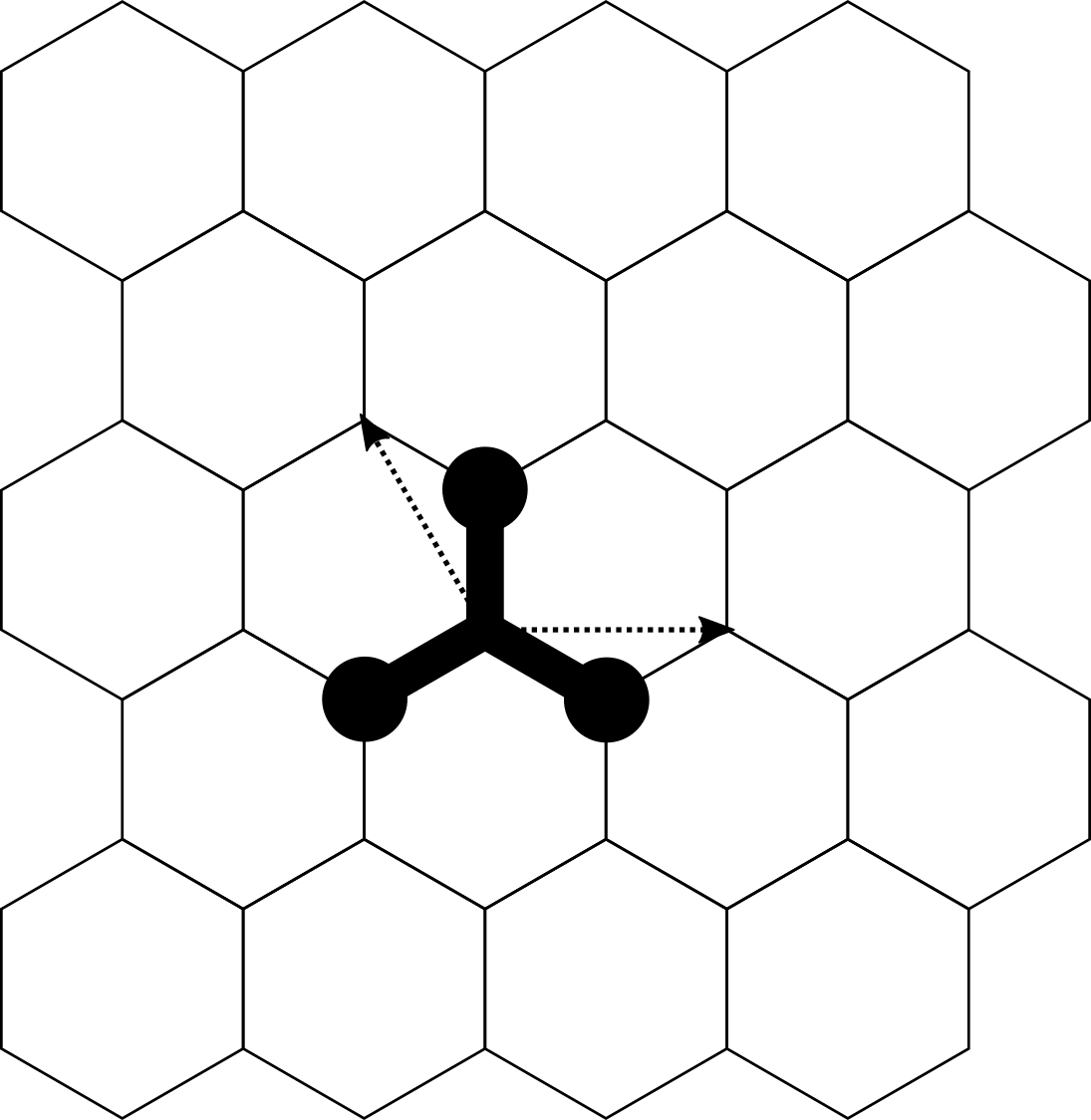}\qquad\qquad
\includegraphics[height=33mm]{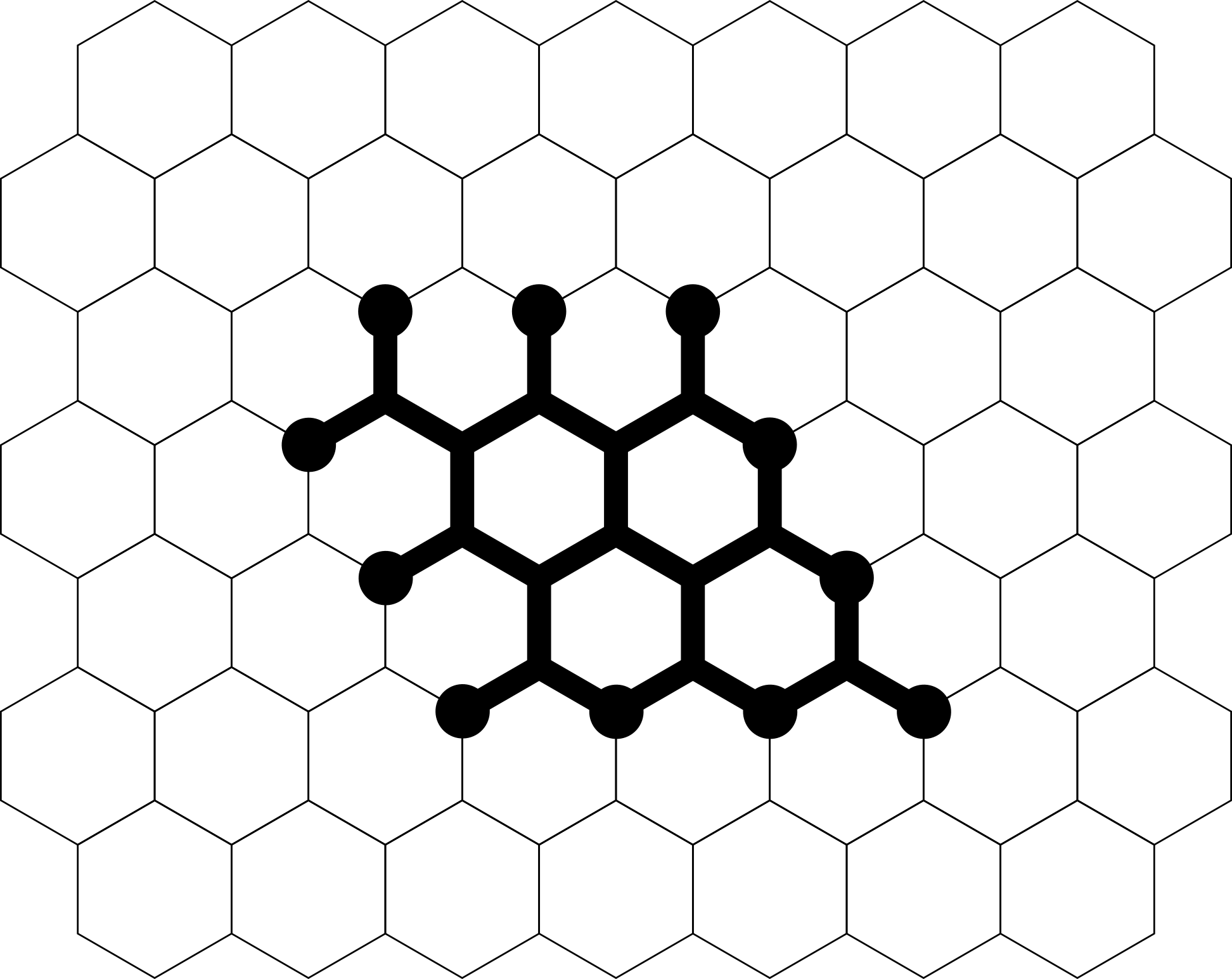}\qquad\qquad
\includegraphics[height=33mm]{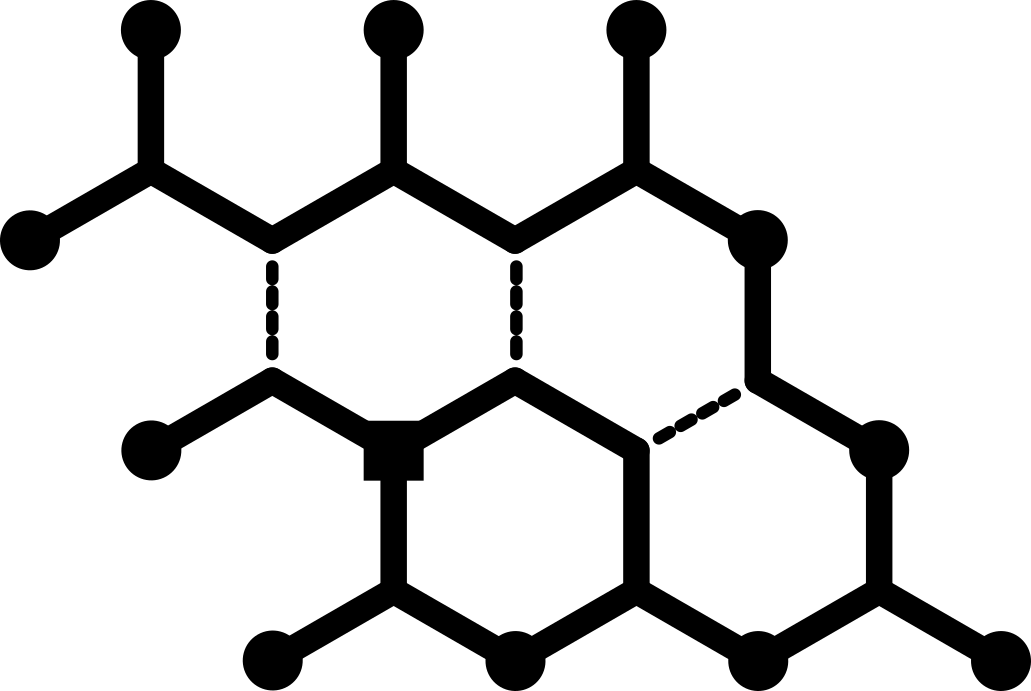}

\put(-100,48){$Y_1$}
\put( -67,48){$Y_2$}
\put( -45,29){$Y_3$}

\put( -98,25){$\widetilde v$}

\put( -409,49){$_{\vec \nu_2}$}
\put( -388,45){$_{\vec \nu_1}$}

\put(-400,-15){\text{(a)}}
\put(-250,-15){\text{(b)}}
\put(-70,-15){\text{(c)}}
\end{picture}\vspace{5mm}
\caption{
(a) $\Z^2$-periodic graph $\Gamma$ and its fundamental domain. \\
(b) The fundamental domain  $Y^r$ for $r=(2,2)$.\\  
(c) Decomposition of $Y^r$ for $m=3$.
\label{fig1}}
\end{center} 
\end{figure}

\subsection{Decomposition of a fundamental domain\label{subsec11+}} 
 
It is easy to see that for any $m\in \N$ there exists such  
$r=(r_1 , \dots , r_n ) \in \N^n_0$ that the fundamental domain $Y^r$ \eqref{rescalled}
can be represented as a union 
\begin{gather}\label{Ydecomp}
Y^r= \cupl_{j=0}^m  {Y_j} 
\end{gather}
of non-empty compact sets $Y_j $, $j=0,\dots,m$ satisfying the following conditions:
\begin{gather}
\label{assumptions}
\begin{array}{cl}
\rm(i)&Y_j\text{ are  connected},\ j=0,\dots,m,
\\ 
\rm(ii)&\mathcal{U}_Y \subset  {Y_0},\quad \mathcal{U}_Y \cap  {Y_j}=\varnothing,\ j=1,\dots m,
\\ 
\rm(iii)&\forall (j\not=0,\, k \not= 0,\, j \not= k):\ 
   {Y_j} \cap  {Y_k} = \varnothing,
\\ 
\rm(iv)& \text{the sets } \V_j:={Y_j} \cap    {Y_0} \text{ are non-empty  and consist of vertices}, \ j = 1,\dots , m,
\\ 
\rm(v)&
Y_0\text{ has a vertex }\wt v 
\text{ belonging neither to }\mathcal{U}_Y \text{ nor to }  \cup_{j=1}^m  {Y_j} .
\end{array}
\end{gather}
It is easy to see that such a decomposition is always possible for  
\textit{large enough} $r_1, r_2 , \dots , r_n$ 
\footnote{\label{foot1} In order to  achieve   decomposition \eqref{Ydecomp}-\eqref{assumptions} we require our initial assumption on $\Gamma$ that it does not coincide with a line. 
If $\Gamma$ is a line, its fundamental domain $Y^r$ would be a compact interval;
one can decompose it in such a way that properties (ii)-(v) hold, but then the set $Y_0$ will be always disconnected.}. Of course such a decomposition is not unique.
For example on  Figure~\ref{fig1}(c) the domain $Y^r$   is decomposed  in such a way that  \eqref{Ydecomp}-\eqref{assumptions} with $m=3$ holds:
   $Y_0$ consists of bold solid lines,  while $Y_1,Y_2,Y_3$ consist of one  dashed  edge,
the black square is   $\widetilde v$.

Now, let  $m\in\N$ be given and let us fix such
$r=(r_1,\dots,r_n)\in\N^n_0$ that the fundamental domain $Y^r$ admits  representation \eqref{Ydecomp}-\eqref{assumptions}.
We set for $i\in\Z^n$\,:
\begin{gather*}
V_{ij}:=i^r\cdot \V_j,\ j=1,\dots,m,\quad Y_{ij}:=i^r\cdot Y_j,\ j=0,\dots,m,\quad \wt v_i:=i^r\cdot\wt v,
\end{gather*}
where  the mapping $i^r\cdot:\Gamma\to\Gamma$ is defined by 
$
i^r\cdot x=  x+\suml_{k=1}^n i_k r_k \nu_k,\ x\in\Gamma.
$

The vertices belonging to $\V_{ij}$ will support $\delta'$-type conditions, the vertices  $\wt v_i$ will support $\delta$-conditions, in the remaining vertices the Kirchhoff conditions will be posed.

\subsection{Functional spaces\label{subsec12}} 

In what follows if $u : \Gamma \to \C$ and $e \in \E_\Gamma$ then by $u_e$ we denote the
restriction of $u$ onto the interior of $e$. Via a local coordinate $x_e$ we identify $u_e$ with a function on $(0, l(e))$.

The space $\L (\Gamma)$ consists of  functions $u : \Gamma \to \C$ such that 
$u_e \in \L (0, l(e))$ for each edge $e$ and  
$$
\|u\|^2_{\L(\Gamma)}:=\suml_{e\in\E_\Gamma}\|u_e\|^2_{\L(0,l(e))}<\infty.
$$
The space $\wt\W^k(\Gamma)$, $k\in\N$ consists of   functions $u : \Gamma \to \C$ such that 
$u_e$ belongs to the Sobolev space $\W^k (0, l(e))$ for each edge $e$ and
$$
\|u\|^2_{\wt\W^k(\Gamma)}:=\suml_{e\in\E_\Gamma}\|u_e\|^2_{\W^k(0,l(e))}<\infty.
$$
By $\W^1_\h (\Gamma)$ we denote a subspace of $\wt\W^1(\Gamma)$
consisting of such  function $u\in \wt\W^1(\Gamma)$ that
\begin{itemize}

\item   if $v \in \V_\Gamma\setminus\left( \cup_{i\in\Z^n}\cup_{j=1}^m \V_{ij}\right)$ then 
$u$ is continuous at $v$, i.e. the limiting value of $u(x)$ when
$x$ approaches $v$ along $e \in \E(v)$ is the same for each   $e\in \E(v)$. We denote this value by $u(v)$;

\item if $v \in \V_{ij}=Y_{ij}\cap Y_{i0}$ for some $i = (i_1 , \dots , i_n ) \in \Z^n$, 
$j \in \{1, \dots , m\}$ then
\begin{itemize}

\item the limiting value of $u(x)$ when $x$ approaches $v$ along $e \in \E(v)\cap Y_{i0} $ is the same for each $e \in \E(v)\cap Y_{i0}$. We denote this value by $u_0 (v)$.

\item the limiting value of $u(x)$ when $x$ approaches $v$ along $e \in \E(v)\cap Y_{ij} $ is the same for each
  $e\in \E(v)\cap Y_{ij} $. We denote this value by $u_j (v)$.

\end{itemize}

\end{itemize}

\subsection{Operator $\H\e$\label{subsec13}}

Let $\eps>0$ be a small parameter.
In $\L (\Gamma)$ we introduce the quadratic form $\h\e $,
\begin{gather}
\label{form}
\h\e\la u,u \ra=\eps^{-1}\suml_{e\in\E_\Gamma}\|u_e'\|^2_{\L(0,l(e))}+
\suml_{i\in\Z^n}\suml_{j=1}^m\suml_{v\in\V_{ij}}\al_j|u_0(v)-\be_j u_j(v)|^2+\suml_{i\in\Z^n}
\gamma|u(\wt v_i)|^2
\end{gather}
on the domain $\dom(\h\e ) = \W^1_\h (\Gamma)$.
Here $\al_j$, $\be_j$, $\ga$ are real constants, moreover $\al_j\not=0$, $\be_j\not=0$
(this assumption is needed to avoid the  decoupling at the vertex $v$,
cf.~\eqref{delta'}). These constants are on our disposal and they will be specified later in Section~\ref{sec3}.
The second and third terms in the right-hand-side of
\eqref{form} are indeed finite on $u \in \wt\W^1  (\Gamma)$, this follows easily  from the  trace inequality
\cite[Lemma 1.3.8]{BK13}
\begin{gather*}
|u(0)|^2 \leq 2l^{-1} \|u\|^2_{\L (0,l)} + l\|u'\|^2_{\L (0,l)},\ \forall u \in \W^1 (0, l)
\end{gather*}
and periodicity of $\Gamma$. It is also straightforward to verify that the form $\h\e$ is densely defined in $\L(\Gamma)$, lower
semibounded and closed.  By the
first representation theorem  \cite[Theorem~VI.2.1]{Ka66}  there exists the unique
self-adjoint  operator $\H\e$ associated with the form $\h\e $, i.e.
\begin{gather}\label{FRT}
(\H\e  u, w)_{\L  (\Gamma)} = \h\e  \la u,w \ra,\
\forall u \in \dom(\H\e  )\subset \dom(\h\e  ),\ \forall w \in \dom(\h\e  ),
\end{gather}
where $\h\e  \la u,w \ra$ is the  sesquilinear form, which corresponds  to the quadratic form \eqref{form}.

The domain of $\H\e$ consists of functions $u \in \W^1_\h (\Gamma) \cap \wt\W^2(\Gamma)$ satisfying
\begin{gather}
\label{Kirchhoff}
\ds\suml_{e\in\E(v)}\left.{\d u_e\over \d\x_e}\right|_{\x_e=0}=0\quad\text{at}\quad v\in\V_\Gamma\setminus
\cupl_{i\in \Z^n}\left(\{\wt v_i\}\cup\left(\cupl_{j=1}^m \V_{ij} \right)\right),
\\
\label{delta}
\ds\suml_{e\in\E(v)}\left.{\d u_e\over \d\x_e}\right|_{\x_e=0}=\gamma\eps\, u(v)\quad\text{at}\quad v=\wt v_i,\\
\label{delta'}
\left.
\begin{matrix}
\ds\suml_{e\in\E(v)\cap Y_{i0}}\left.{\d u_e\over \d\x_e}\right|_{\x_e=0}=\al_j\eps\left(u_0(v)-\be_ju_j(v)\right),\\
\ds\suml_{e\in\E(v)\cap Y_{ij}}\left.{\d u_e\over \d\x_e}\right|_{\x_e=0}=-\al_j\eps\left(u_0(v)-\be_ju_j(v)\right)
\end{matrix}
\right\}\quad\text{at}\quad v\in\V_{ij},
\end{gather}
where $\x_e\in[0,l(e)]$ is a natural coordinate on $e \in \E(v)$ such that $\x_e = 0$ at $v$. 
The action of $\H\e$ is 
\begin{gather}
\label{action}
(\H\e  u)_e = -\eps^{-1}{\d^2 u_e\over \d \x_e^2},\ e \in \E_\Gamma.
\end{gather}
Condition \eqref{Kirchhoff} is usually referred  as Kirchhoff coupling, condition \eqref{delta} is known
as  $\delta$-coupling of the strength $\gamma\eps$. We may refer to conditions \eqref{delta'} as 
$\delta'$-type coupling. The
reason for this is as follows. Suppose that $v \in \V_{i j}$ has only two outgoing edges 
$e \in \E(v) \cap Y_{i 0} $ and $\wt e \in \E(v) \cap Y_{ij}$. 
Also let $\be_j = 1$. Then  conditions \eqref{delta'} are equivalent to
$$ 
\left.{\d u_e\over \d\x_e}\right|_{\x_e=0}+\left.{\d u_{\wt e}\over 
\d\x_{\wt e}}\right|_{\x_{\wt e}=0}=0,\quad
(\al_j\eps)^{-1}\left.{\d u_e\over \d\x_e}\right|_{\x_e=0}=u_e|_{\x_e=0}-u_{\wt e}|_{\x_{\wt e}=0}.
$$
Taking into account the definition of   coordinates $\x_e$ and $\x_{\wt e}$  we conclude that \eqref{delta'} coincides
with the usual $\delta'$-conditions of the strength $(\al_j\eps)^{-1}$ 
at a point on the line \cite[Section~I.4]{AGHH05}.

\subsection{Main results\label{subsec14}} 

We denote
\begin{gather}\label{lj}
l_j:=\ds\suml_{e\in \E_{Y_j}}l(e),\ j=0,\dots,m,\qquad
N_j:=\textrm{\,cardinality of }\V_j,\ j=1,\dots,m.
\end{gather}
Then for $j = 1, \dots , m$ we set
\begin{gather}
\label{Aj}
A_j:={\al_j \beta_j^2 N_j l^{-1}_j}.
\end{gather}
We assume that $A_j$ are pairwise distinct; in this case we can renumber them in such a way that
\begin{gather}
\label{Amono}
\forall j = 1, \dots , m - 1:\quad
A_j < A_{j+1}.
\end{gather}
Finally, we consider the following equation (for unknown $\lambda \in \C\setminus\{A_1,\dots,A_m\}$)\,:
\begin{gather}
\label{equat}
\lambda\left(l_0+\suml_{j=1}^m {A_jl_j  \over  \be_j^2(A_j-\lambda)}\right)=\gamma.
\end{gather}
It is easy to show that this equation has exactly 
$m + 1$  roots $B_j$, $j = 0, \dots , m$, they are real, moreover (after an appropriate renumeration) these roots satisfy
\begin{gather}
\label{interlace}
B_{0 }<A_{1 }<B_{1 }<A_{2}<B_{2}<\dots<A_{m }<B_{m }.
\end{gather}
We are now in position to formulate the first main result of this work.

\begin{theorem}
\label{th1}
There exist  such positive constants $\Lambda_0$ (depending on $Y$) and $C_A,\,C_B,\,\eps_0$ (depending on $\al_j$, $\be_j$, $\ga$ and $Y$)   that the spectrum of $\mathcal{H}\e$ has the following structure within $(-\infty,\Lambda_0\eps^{-1}]$\,:
\begin{gather}\label{th1:spec}
\sigma(\H\e)\cap (-\infty,\Lambda_0\eps^{-1}]=[B_{0,\eps},\Lambda_0 \eps^{-1}]\setminus\cupl_{j=1}^m (A_{j,\eps},B_{j,\eps})\quad\text{as }\eps<\eps_0,
\end{gather} 
where the  numbers\, $A_{j,\eps}$, $j=1,\dots,m$\ \, and\ \, $B_{j,\eps}$, $j=0,\dots,m$\, satisfy
\begin{gather}\label{th1:inter}
B_{0,\eps}<A_{1,\eps}<B_{1,\eps}<A_{2,\eps}<B_{2,\eps}<\dots<A_{m,\eps}<B_{m,\eps}<\Lambda_0\eps^{-1}, 
\end{gather}
moreover
\begin{gather}
\label{th1:estim}
0\le A_j - A_{j,\eps}\leq C_A\eps^{1/2},\ j=1,\dots,m,\qquad
0\le B_j  - B_{j,\eps}\leq C_B\eps^{1/2},\ j=0,\dots,m.
\end{gather}
\end{theorem}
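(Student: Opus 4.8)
The plan is to combine the Floquet--Bloch decomposition with the abstract comparison lemma of \cite{EP05} applied fiberwise. First I would pass to the Gelfand transform: since $\H\e$ commutes with the translations $i^r\cdot$, it is unitarily equivalent to a direct integral $\int^{\oplus}\H\e(\theta)\,\d\theta$ over the Brillouin zone $\mathbb{T}^n$ of fiber operators acting in $\L(Y^r)$, where $\H\e(\theta)$ is generated by the same differential expression \eqref{action} and the same vertex conditions \eqref{Kirchhoff}--\eqref{delta'}, supplemented by $\theta$-quasiperiodic conditions identifying the vertices of $\mathcal{U}_{Y^r}$. Each fiber has compact resolvent and hence a discrete spectrum $\lambda_1(\theta,\eps)\le\lambda_2(\theta,\eps)\le\dots$, with $\sigma(\H\e)=\bigcup_{k}\{\lambda_k(\theta,\eps):\theta\in\mathbb{T}^n\}$. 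The whole problem then reduces to locating the first $m+1$ band functions $\theta\mapsto\lambda_k(\theta,\eps)$ and to showing that the remaining ones stay above $\Lambda_0\eps^{-1}$.

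Second, I would identify the limiting finite-dimensional problem. Since $-\eps^{-1}u_e''=\lambda u_e$ forces $\|u_e'\|^2_{\L(0,l(e))}=O(\eps)$ whenever $\lambda=O(1)$, the low-energy eigenfunctions are, up to an $O(\eps^{1/2})$ error, constant on the backbone $Y_0$ and on each pendant $Y_j$. Inserting the piecewise-constant ansatz $u\equiv s_0$ on $Y_0$, $u\equiv s_j$ on $Y_j$ into \eqref{form} and into $\|u\|^2_{\L(\Gamma)}$ yields the reduced pair of forms
\[
\h_0[s]=\suml_{j=1}^m N_j\al_j|s_0-\be_j s_j|^2+\ga|s_0|^2,\qquad \|s\|_0^2=l_0|s_0|^2+\suml_{j=1}^m l_j|s_j|^2,
\]
on $\C^{m+1}$. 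A direct computation of the associated generalized eigenvalue problem shows that the eigenvectors with $s_0\ne0$ reproduce exactly equation \eqref{equat}, i.e.\ the eigenvalues $B_0,\dots,B_m$, whereas imposing $s_0=0$ (which is forced in the limit whenever $\theta\ne0$, a nonzero constant not being $\theta$-quasiperiodic) leaves the decoupled pendant problem with the eigenvalues $A_j$ from \eqref{Aj}; the interlacing \eqref{interlace} then follows from the sign structure of the rational function in \eqref{equat}.

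Third, the upper bounds $A_{j,\eps}\le A_j$ and $B_{j,\eps}\le B_j$ contained in \eqref{th1:estim} I would obtain for free from the min--max principle. The pendant-supported constants (with $s_0=0$) are admissible trial functions for \emph{every} $\theta$ and give $\lambda_k(\theta,\eps)\le A_k$ for $k\le m$; at $\theta=0$ the full $(m+1)$-dimensional piecewise-constant space is admissible and gives $\lambda_k(0,\eps)\le B_{k-1}$ for $k\le m+1$. Taking $A_{j,\eps}=\max_\theta\lambda_j(\theta,\eps)$ and $B_{j,\eps}=\min_\theta\lambda_{j+1}(\theta,\eps)$, these Rayleigh-quotient estimates bound every band edge from above by the corresponding limiting value.

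Fourth --- and this is the \emph{crux} --- I would establish the matching lower bounds with rate $\eps^{1/2}$ by invoking the comparison lemma of \cite{EP05}. For each $\theta$ I construct identification maps between $\L(Y^r)$ and the reduced space $(\C^{m+1},\|\cdot\|_0)$ (extending vectors to piecewise-constant functions in one direction, taking edgewise averages in the other) and verify the required quasi-unitarity and quasi-intertwining estimates, the defects being controlled by $\|u_e'\|\sim\eps^{1/2}$ through the trace inequality quoted after \eqref{form}. The delicate point, which I expect to be the main obstacle, is uniformity in $\theta$ together with the change of effective dimension near $\theta=0$: the backbone mode must be split off and treated separately, since for $\theta\ne0$ its energy is pushed up to order $\eps^{-1}$ by the quasiperiodic winding, while at $\theta=0$ it survives as the slow mode producing $B_m$. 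Handling this transition uniformly, and simultaneously fixing a purely geometric threshold $\Lambda_0$ (below the first edge-oscillation level of $Y^r$) so that $\lambda_{m+2}(\theta,\eps)\ge\Lambda_0\eps^{-1}$ for all $\theta$ while the connected band $\{\lambda_{m+1}(\theta,\eps)\}$ still attains $\Lambda_0\eps^{-1}$, completes the argument; once this is in place, \eqref{th1:spec}, \eqref{th1:inter} and \eqref{th1:estim} follow by reading off the band edges $A_{j,\eps}$ and $B_{j,\eps}$.
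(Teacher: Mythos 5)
Your overall architecture (Floquet--Bloch reduction, piecewise-constant reduced model on $\C^{m+1}$ reproducing equation \eqref{equat} and the numbers $A_j$, upper bounds on band edges by Rayleigh quotients with piecewise-constant trial functions, lower bounds via the comparison lemma of \cite{EP05} with averaging/extension maps at rate $\eps^{1/2}$) matches the paper's. The upper-bound step is essentially Lemmata~\ref{lmP} and \ref{lmD}: pendant-supported constants are admissible for every $\theta$ (equivalently, for the Dirichlet form $\h\e^D$) and give $\sup L_{j,\eps}\le A_j$, while the full $(m+1)$-dimensional space at $\theta_p$ gives $\inf L_{j,\eps}\le B_{j-1}$. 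This part of your proposal is sound.

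The gap is in your ``crux'' step. You propose to apply the \cite{EP05} lemma \emph{fiberwise} and you correctly identify the obstruction --- uniformity in $\theta$ and the change of effective dimension of the reduced model between $\theta=\theta_p$ and $\theta\neq\theta_p$ --- but you do not resolve it; ``handling this transition uniformly \dots completes the argument'' is precisely the missing proof. Moreover, the obstruction as you describe it is partly a pseudo-problem created by the fiberwise formulation: the band functions are continuous in $\theta$, so the backbone mode is \emph{not} pushed to order $\eps^{-1}$ for all $\theta\neq\theta_p$, only non-uniformly away from $\theta_p$, and there is no clean dichotomy to glue. The paper sidesteps all of this with Dirichlet--Neumann bracketing $\lambda_k(\H\e^N)\le\lambda_k(\H\e^\theta)\le\lambda_k(\H\e^D)$ (inequality \eqref{enclosure}). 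A lower bound on the band minimum $\inf L_{j,\eps}=\min_\theta\lambda_j(\H\e^\theta)$ then requires the \cite{EP05} estimate only for the single Neumann operator against the $(m+1)$-dimensional model (Lemma~\ref{lmN}); a lower bound on the band maximum $\sup L_{j,\eps}$ requires it only at the single antiperiodic fiber $\theta_a$ against the $m$-dimensional model (Lemma~\ref{lmA}), since bounding a supremum from below needs just one $\theta$; and the threshold $\Lambda_0$ is obtained from $\lambda_{m+1}(\H\e^{\theta_a})>\Lambda_0\eps^{-1}$ alone (Lemma~\ref{lm1}) --- you do not need, and should not try to prove, $\lambda_{m+2}(\H\e^\theta)\ge\Lambda_0\eps^{-1}$ for all $\theta$; connectedness of the $(m+1)$-st band already fills $[B_{m,\eps},\Lambda_0\eps^{-1}]$. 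With these three substitutions your step four closes, and no $\theta$-uniform fiberwise analysis is ever needed.
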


\section{Proof of Theorem~\ref{th1}\label{sec2}}

\subsection{Preliminaries\label{subsec31}}

To simplify the notations we assume that the fundamental domain $Y^r$ 
admits representation \eqref{Ydecomp}-\eqref{assumptions} for $r=0$, i.e., already the initial fundamental domain $Y$ admits such a representation. In the general case one should simply change the notations accordingly. 

In the following if $\H$ is a self-adjoint lower semi-bounded operator with purely discrete spectrum, we denote 
by $\left\{\lambda_k(\H)\right\}_{k\in\mathbb{N}}$ the sequence of its eigenvalues arranged in the ascending order and repeated according to their multiplicity.   
\smallskip

The Floquet-Bloch theory \cite[Chapter 4]{BK13} establishes a relationship between the spectrum of the operator 
$\H\e$ and the spectra of certain operators $\H\e^\theta$ in $\L(Y)$.
Namely, let $$\theta\in 
\mathbb{T}^n=\left\{\theta=(\theta_1,\dots,\theta_n)\in\mathbb{C}^n,\ 
|\theta_k|=1\text{ for all }k=1,\dots,n\right\}.$$ 
We denote by $\W^{1,\theta}_\h(\Gamma)$ the  set of such functions 
$u:\Gamma\to\C$  that $u_e\in \W^1(0,l(e))$ for each $e\in\E_\Gamma$,
$u$ satisfy the same conditions at vertices of $\Gamma$ as functions from $\W^{1}_\h(\Gamma)$,
and
$$\forall x\in\Gamma,\ \forall i=(i_1,\dots,i_n)\in\Z^n:\ 
u(i \cdot x)= \theta^i u(x),\text{ where }\theta^i:=\left(\prod\limits_{k=1}^n(\theta_k)^{i_k}\right)$$
(recall, that the mapping $i\cdot:\Gamma\to\Gamma$ is defined by \eqref{idot}).
We introduce the quadratic form $\h\e^\theta$  by
\begin{gather}\label{form:action}
\begin{array}{l}
\h^\theta\e\la u,u\ra=\ds\eps^{-1}\suml_{e\in\E_Y}\|u_e'\|^2_{\L(0,l(e))}
+\suml_{j=1}^m \suml_{v\in \V_{j}} 
\al_j\left|u_{0}(v)-\be_j u_{j}(v)\right|^2+\gamma \left|u(\wt v)\right|^2,\\
\mathrm{dom}(\h\e^\theta)=\left\{u=v|_Y,\ v\in\W_\h^\theta(\Gamma)\right\}.
\end{array}
\end{gather}
Hereinafter by $\E_Y$ and $\V_Y$  we denote the set of edges and vertices of $Y$, respectively;
similar notations will be used for $Y_j$.
The form $\h\e^\theta$ is  densely defined in $\L(Y)$, lower
semibounded and closed. We denote by $\H^\theta_{\eps}$   the operator  
associated with  $\h^\theta_{\eps}$.
The spectrum of $\H^\theta_{\eps}$ is purely discrete, moreover
for each $k\in\N$ the function $\theta\mapsto \lambda_k(\H^\theta_{\eps})$ is continuous. 
Consequently, the set
\begin{gather}\label{repres1+}
 L_{k,\eps}=\cupl_{\theta\in \mathbb{T}^n} 
\left\{\lambda_k(\H^\theta_{\eps})\right\}\text{ is a compact interval}.
\end{gather}
According to the Floquet-Bloch theory we have the following representation:
\begin{gather}\label{repres1}
\sigma(\H\e)=\cupl_{k=1}^\infty L_{k,\eps}.
\end{gather}

Along with  $\h\e^\theta$ we also introduce the forms $\h\e^N$ and $\h\e^D$ 
acting on the domains
$$
\dom(\h\e^N)=\left\{u=v|_Y,\ v\in\W_\h(\Gamma)\right\}\text{\quad and\quad }
\dom(\h\e^D)=\left\{u=v|_Y,\ v\in\W_\h(\Gamma),\ \mathrm{supp}(v)\subset {Y}\right\}
$$
and  with the action being again specified by \eqref{form:action}. By 
$\H\e^N$ and $\H\e^D$ we denote the associated operators. The spectra of these operators  are purely discrete.  It is easy to see  that
$$
\forall\theta\in \mathbb{T}^n:\quad 
\dom(\h\e^N)\supset\dom(\h\e^\theta)\supset\dom(\h\e^D),
$$
whence, using the min-max principle \cite[Section~4.5]{De95}, we conclude
\begin{gather}\label{enclosure}
\forall k\in \mathbb{N},\ \forall\theta\in \mathbb{T}^n:\quad
\lambda_{k}(\H\e^N) \leq \lambda_{k}(\H\e^\theta) \leq
\lambda_{k}(\H\e^D).
\end{gather}

In the following we  mostly use two distinguished points of $\mathbb{T}^n$,
\begin{gather}\label{pa}
\theta_{p}:=(1,1,\dots,1)\quad{and}\quad \theta_{a}:=-(1,1,\dots,1).
\end{gather}
The subscripts $p$ and $a$ means \emph{periodic} and \emph{antiperiodic}, respectively.

\begin{remark}
Despite in the following we  work only on  the level of  forms  one is curious to take a more close
look  on the  operators $\H^\theta\e$, $\H^N\e$, $\H^D\e$.
Let $u\in\dom(\H^*\e)$ with $*\in\{\theta,N,D\}$. Then
\begin{itemize}
\item for each $e\in\E_Y$ one has $u_e\in\W^2(0,l(e))$, 

\item at the vertices from $ \V_Y\setminus \mathcal{U}_Y$ $u$ satisfies the same conditions  as functions belonging to $\dom(\H\e)$.

\end{itemize}
To describe the behaviour of $u$ on $\mathcal{U}_Y$ we assume for simplicity that points of $\mathcal{U}_Y$ lye on the \emph{interiors} of edges  of $\Gamma$ (in fact, one can always choose 
a period cell in such a way that this assumption fulfills).  This assumption on a period cell implies, in particular, that for any $v\in\mathcal{U}_Y$ there is only one edges of $ \E_Y$ (we denote it $e_v$) emanating from $v$. 
Then we get the following boundary conditions at $\mathcal{U}_Y$:
\begin{itemize}
\item $u\in\dom(\H^\theta\e)$ satisfies $\theta$-periodic conditions at $v\in\mathcal{U}_Y$:
$$
u_{e_{w}}(w)=\theta^i u_{e_v}(v),\quad 
{\d u_{e_{w}}\over \d\x_{e_{  w}} }(w)=-\theta^i {\d u_{e_{ v}}\over \d\x_{e_{v}} }(v)
$$ 
where $w\in \mathcal{U}_Y$ is such  
that $w=i\cdot v$ for some $i\in\Z^n$ (one can show that for each $v\in \mathcal{U}_Y$ there exists a unique $w$ with $w=i\cdot v$ for some $i\in\Z^n$ provided the period cell is chosen as above),
\item $u\in\dom(\H^N\e)$ satisfies Neumann conditions ${\d u_e\over \d\x_e }(v)=0$ at $v\in\mathcal{U}_Y$,
\item $u\in\dom(\H^D\e)$ satisfies Dirichlet conditions $u_e(v)=0$ at $v\in\mathcal{U}_Y$.
\end{itemize}
Above $\x_{e_v}\in [0,l(e_v)]$  is a natural coordinate on $e_v$ such that 
$\x_{e_v}=0$ at $v$; in the same way   $\x_{e_w}$ is defined.
The action of  all operators above  is given by \eqref{action}.  

\end{remark}

\subsection{Determination of $\Lambda_0$}

Recall, that $\theta_a\in\mathbb{T}^n$ is given in \eqref{pa}.

\begin{lemma}\label{lm1}
There exist
$\Lambda_0>0$ and $\eps_\Lambda>0$ such that
\begin{gather}\label{lm1:ineq}
\Lambda_0\eps^{-1}< 
\lambda_{m+1}(\H\e^{\theta_a})\quad\text{as }\eps<\eps_\Lambda.
\end{gather}
\end{lemma}

\begin{proof}
For  $\theta\in\mathbb{T}^n$ and $\eps\ge 0$ we introduce in $\L(Y)$ the   form  $\hb^\theta\e$,
\begin{gather*}
\ds\hb^\theta\e\la u,u\ra= \suml_{e\in\E_Y}\|u_e'\|^2_{\L(0,l(e))}+
\eps\suml_{j=1}^m \suml_{v\in \V_{j}} \al_j 
\left| u_{0}(v)-\beta_j u_j(v)\right|^2+
 \eps\gamma\left|u(\wt v)\right|^2,\quad \mathrm{dom}( \hb^\theta\e)=\mathrm{dom}(\h\e^\theta).
\end{gather*}
We denote by $ \Hb^\theta\e$ the self-adjoint operator associated 
with this form. Obviously,
\begin{gather}
\label{lambda:lambda}
\forall \eps>0\ \forall k\in\N:\quad \eps^{-1}\lambda_k(\Hb^\theta\e)= \lambda_k(\H^\theta\e).
\end{gather}
Also we observe that with respect to the space decomposition  $\L(Y)=\oplus_{j=0}^m\L(Y_j)$ the operator  $\Hb^\theta_0$ can be decomposed in a sum
\begin{gather}
\label{oplus}
\Hb^{\theta}_0=\Hb_{0,0}^\theta\oplus\Hb_{0,1}^N\oplus\Hb_{0,2}^N\oplus\dots\oplus\Hb_{0,m}^N, 
\end{gather}
where the operators $\Hb_{0,0}^\theta$, $\Hb_{0,j}^N$ are  associated 
with the forms $\hb_{0,0}^\theta$, $\hb_{0,j}^N$ defined as follows,
\begin{gather}\label{hb0}
\ds\hb_{0,0}^\theta\la u,u\ra= 
\suml_{e\in\E_{Y_0}}\|u_e'\|^2_{\L(0,l(e))},\quad
 \dom(\hb_{0,0}^\theta)=\left\{u=v|_{Y_0},\ v\in \dom(\h \e^\theta)\right\},
\\
\label{hbj}
\ds\hb_{0,j}^N\la u,u\ra= 
 \suml_{e\in\E_{Y_j}}\|u_e'\|^2_{\L(0,l(e))},\quad 
 \dom(\hb_{0,j}^N)=\left\{u=v|_{Y_j},\ v\in \dom(\h\e^\theta)\right\}.
\end{gather}
It is easy to see that 
\begin{gather}
\label{lambda:j:1}
\lambda_1(\Hb_{0,j}^N)=0,\ j=1,\dots,m,
\end{gather}
and the corresponding eigenspace consists of constant functions.
Due to the connectivity of $Y_j$ one has
\begin{gather}
\label{lambda:j:2}
\lambda_2(\Hb_{0,j}^N)>0,\ j=1,\dots,m.
\end{gather}
If   $\lambda_1(\Hb_{0,0}^\theta)=0$,  the corresponding eigenfunction would be constant 
which is possible iff $\theta=\theta_p$. Thus
\begin{gather}\label{lambda:0:1}
\lambda_1(\Hb_{0,0}^{\theta})>0,\ \theta\not=\theta_p.
\end{gather}
It follows from \eqref{oplus}, \eqref{lambda:j:1}-\eqref{lambda:0:1}
that $\lambda_k(\Hb_{0}^{\theta})=0$ for $k=1,\dots, m$, while
\begin{gather}\label{lambda:0:all}
\lambda_{m+1}(\Hb_{0}^{\theta})=\min\left\{\lambda_1(\Hb_{0,0}^{\theta});
\lambda_2(\Hb_{0,1}^N);\,\lambda_2(\Hb_{0,2}^N);\,\dots;\,\lambda_2(\Hb_{0,m}^N)\right\}>0,\ \theta\not=\theta_p.
\end{gather}
Using the fact that sequence of forms $\hb\e^\theta$ increases monotonically 
as $\eps$ decreases, and moreover 
$
\lim_{\eps\to 0}\hb\e^\theta\la u,u\ra =\hb_0^\theta\la u,u\ra$,
$
\forall u\in \mathrm{dom}(\hb\e^\theta)=\mathrm{dom}(\hb_0^\theta)
$
we conclude \cite[Theorem 4.1]{Si78}:
\begin{gather}
\label{strong}
\forall f\in\L(Y):\ \left\|(\Hb\e^\theta+\mathrm{I})^{-1}f-(\Hb_0^\theta+\mathrm{I})^{-1}f\right\|_{\L(Y)}\to 0\text{ as }\eps\to 0.
\end{gather}
Moreover, 
since the sequence of resolvents $(\Hb\e^\theta+\mathrm{I})^{-1}$ decrease monotonically as $\eps$ decreases, and both resolvents $(\Hb\e^\theta+\mathrm{I})^{-1}$ and $(\Hb_0^\theta+\mathrm{I})^{-1}$ are compact one can upgrade \eqref{strong} to
the norm resolvent convergence \cite[Theorem VIII-3.5]{Ka66}. As a consequence we get the convergence of spectra, namely
\begin{gather}\label{lambda:j:conv}
\forall k\in\N:\ \lambda_k(\Hb_{\eps}^\theta)\to \lambda_k(\Hb^\theta_{0})\text{ as }\eps\to 0.
\end{gather}
We set
\begin{gather}\label{Lambda0}
\Lambda_0:=\ds{\lambda_{m+1}(\Hb_{0}^{\theta_a})\over 2}.
\end{gather}
Since $\theta_a\not=\theta_p$, then $\Lambda_0>0$.
It follows from  \eqref{lambda:j:conv} that there exists 
$\eps_\Lambda>0$  such that
\begin{gather}  \label{lambda:L0}
 \Lambda_0<\lambda_{m+1}(\Hb\e^{\theta_a})  \text{ as }\eps<\eps_\Lambda.
\end{gather}
Combining \eqref{lambda:lambda} and \eqref{lambda:L0}
we arrive at the desired estimate \eqref{lm1:ineq}.
The lemma is proven.
\end{proof}

\subsection{Comparison of eigenvalues}
Here we recall a result from \cite{EP05} serving to compare eigenvalues of two operators acting in different Hilbert spaces.
Let $H$ and $H'$ be separable Hilbert spaces, $\H$ and $\H'$ be non-negative self-adjoint operators in these spaces, and $\h$ and $\h'$ be the associated quadratic forms. We assume that both operator $\H$ and $\H'$ have purely discrete spectra. 
 
\begin{lemma}\label{lm:EP} {\cite[Lemma~2.1]{EP05}}
Suppose that $\Phi:\dom(\h)\to\dom(\h')$ is a linear map such that  
\begin{gather*}
\|u\|^2_{H}\leq \|\Phi u\|_{H'}^2 + \delta_1\left(\|u\|_{H}^2+\h\la u, u\ra\right),\qquad
\h' \la \Phi u, \Phi u\ra\leq \h\la u, u\ra + \delta_2\left(\|u\|_{H}^2+\h\la u, u\ra\right)
\end{gather*}
for all $u\in \dom (\h)$.
Here  $\delta_1,\,\delta_2 $ are some positive constants. Then for each $j\in\N$ we have
\begin{gather}\label{est:EP}
\lambda_j(\H')\leq \lambda_j(\H) + {\lambda_j(\H)(1+\lambda_j(\H) )\delta_1+(1+\lambda_j(\H) )\delta_2\over 1-(1+\lambda_j(\H) )\delta_1}
\end{gather}
 provided the denominator $1-(1+\lambda_k(\H))\delta_1$ is positive.
\end{lemma}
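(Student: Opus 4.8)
The statement to prove is Lemma~\ref{lm:EP}, the abstract comparison lemma from \cite{EP05}. I want to prove the eigenvalue bound \eqref{est:EP} using the min-max principle, exploiting the linear map $\Phi:\dom(\h)\to\dom(\h')$ together with the two hypotheses. Let me sketch my plan.

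The plan is to use the min-max (Courant--Fischer) characterization of eigenvalues on both sides and push a well-chosen test subspace through $\Phi$. First I would take the $j$-dimensional subspace $L\subset\dom(\h)$ spanned by the first $j$ eigenvectors $u_1,\dots,u_j$ of $\H$, so that the Rayleigh quotient $\h\la u,u\ra/\|u\|_H^2$ is bounded above by $\lambda_j(\H)$ for every $u\in L$. I would then consider $\Phi L\subset\dom(\h')$ as a candidate trial space for $\lambda_j(\H')$. To apply min-max on the primed side I must verify two things: that $\Phi$ is injective on $L$ (so that $\dim\Phi L=j$), and that I can control the Rayleigh quotient $\h'\la\Phi u,\Phi u\ra/\|\Phi u\|_{H'}^2$ from above uniformly on $\Phi L$.

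The two hypotheses are exactly tailored to this. For $u\in L$, write $R:=\h\la u,u\ra\le\lambda_j(\H)\|u\|_H^2$. The second hypothesis gives the numerator bound
\begin{gather*}
\h'\la\Phi u,\Phi u\ra\le R+\delta_2\bigl(\|u\|_H^2+R\bigr)\le\bigl(\lambda_j(\H)+\delta_2(1+\lambda_j(\H))\bigr)\|u\|_H^2,
\end{gather*}
while the first hypothesis, rearranged, gives the denominator (and injectivity) bound
\begin{gather*}
\|\Phi u\|_{H'}^2\ge\|u\|_H^2-\delta_1\bigl(\|u\|_H^2+R\bigr)\ge\bigl(1-\delta_1(1+\lambda_j(\H))\bigr)\|u\|_H^2.
\end{gather*}
Under the stated positivity of the denominator $1-(1+\lambda_j(\H))\delta_1>0$, this lower bound shows $\Phi u\neq 0$ for $u\neq 0$, hence $\Phi$ is injective on $L$ and $\dim\Phi L=j$; it also gives a strictly positive lower bound on $\|\Phi u\|_{H'}^2$. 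Dividing the numerator bound by the denominator bound then controls the Rayleigh quotient on all of $\Phi L$ by the single quantity $\bigl(\lambda_j(\H)+\delta_2(1+\lambda_j(\H))\bigr)\big/\bigl(1-\delta_1(1+\lambda_j(\H))\bigr)$.

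By the min-max principle applied to the $j$-dimensional space $\Phi L$, the $j$-th eigenvalue $\lambda_j(\H')$ is bounded above by this maximal Rayleigh quotient, so
\begin{gather*}
\lambda_j(\H')\le\frac{\lambda_j(\H)+\delta_2(1+\lambda_j(\H))}{1-\delta_1(1+\lambda_j(\H))}.
\end{gather*}
A short algebraic rearrangement of the right-hand side—subtracting and adding $\lambda_j(\H)$ in the numerator and combining over the common denominator—recasts this exactly as $\lambda_j(\H)$ plus the quotient appearing in \eqref{est:EP}, completing the argument. The one genuine subtlety to handle carefully is the injectivity of $\Phi$ on the test space: everything hinges on the denominator positivity hypothesis, which simultaneously guarantees $\dim\Phi L=j$ (so min-max is legitimately applied at index $j$) and keeps the quotient well-defined; I would state this explicitly rather than treat it as automatic. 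The remaining steps are routine estimates and algebra.
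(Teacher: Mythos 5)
Your proof is correct. The paper does not actually prove this lemma --- it is quoted verbatim from \cite{EP05} --- and your argument (pushing the span of the first $j$ eigenvectors of $\H$ through $\Phi$, using the first hypothesis to get injectivity and a lower bound on $\|\Phi u\|_{H'}^2$, the second to bound the numerator, and then applying min-max to the $j$-dimensional image) is essentially the standard proof given in that reference, with the final algebraic identity checking out exactly.
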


\begin{remark}\label{remA}
 The above result was established in \cite{EP05} under the assumption that $\dim H=\dim H'=\infty$, however, it is easy to see from its proof that the result remains valid for $\dim H'<\infty$ as well. In that case \eqref{est:EP} holds for $j\in\{1,\dots,\,\dim H'\}$.  
\end{remark}

\subsection{Estimates on $\lambda_{k} (\H^N\e)$ and $\lambda_{k}(\H^{\theta_p}\e)$}
 
In this subsection  we  denote by bold letters (e.g., $\u$) the elements of $\C^{m+1}$. Their entries will be enumerated starting from zero, i.e.
$$
\u\in \C^{m+1}\ \Rightarrow\ \u=(u_0,\dots,u_m)\text{ with }u_j\in\C.
$$
Let $\C^{m+1}_l$ be the same space $\C^{m+1}$ equipped with the weighted scalar product
\begin{gather}
\label{scalar-product}
(\u,\v)_{\C^{m+1}_l}=\suml_{j=0}^m u_j \overline{v_j} l_j
\end{gather}
(recall that $l_j$ and $N_j$ are defined by \eqref{lj}).
Note that $\C^{m+1}_l$ is isomorphic to  a subspace of
$\L(Y)$ consisting of functions being constant on each $Y_j$, $j=0,\dots,m$.
In $\C^{m+1}_l$  we introduce the  form
\begin{gather}\label{hN0}
\h^N_0\la \u,\u\ra=\suml_{j=1}^m  
\al_j N_j \left|u_0-\be_j u_j\right|^2+\gamma \left|u_0\right|^2.
\end{gather}
This form is associated with the operator $\H^N_0$ in $\C^{m+1}_l$ being given by the symmetric (with respect to the scalar product \eqref{scalar-product}) matrix
\begin{gather*}
\H^N_0=\left(\begin{matrix}\gamma l_0^{-1}+\ds\suml_{j=1}^m \al_j N_j l_0^{-1}&-\al_1\be_1 N_1 l_0^{-1}&-\al_2\be_2 N_2 l_0^{-1}&\dots&-\al_m\be_m N_m l_0^{-1}\\-\al_1\be_1 N_1 l_1^{-1}&\al_1\be^2_1 N_1 l_1^{-1}&0&\dots&0\\[1mm]
-\al_2\be_2 N_2 l_2^{-1}&0&\al_2\be_2^2 N_2 l_2^{-1}&\dots&0\\
\vdots&\vdots&\vdots&\ddots&\vdots\\
-\al_m\be_m N_m l_m^{-1}&0&0&\dots&\al_m\be_m^2 N_m l_m^{-1}
\end{matrix}\right).
\end{gather*} 
We denote by $\lambda_1(\H^N_0)\le \lambda_2(\H^N_0)\le\dots\le \lambda_{m+1}(\H^N_0)$ its eigenvalues. 
It turns out that
\begin{gather}\label{lambdaB}
 \lambda_j(\H_0^N)= B_{j-1},\ j=1,\dots,m+1.
\end{gather}
Indeed, let $\lambda$ be the eigenvalue of $\H^N_0$ such that $\lambda\notin\{A_1,A_2,\dots,A_m\}$, and let
 $0\not=\u=(u_0,\dots,u_m)$ be the corresponding eigenfunction. The equation $\H^N_0\u=\lambda \u$
is a linear algebraic system for $u_0,\dots,u_m$. From the last $m$ equations of this system we infer
\begin{gather}\label{uu}
u_j={\al_j\be_j N_j l_j^{-1}\over \al_j\be_j^2 N_j l_j^{-1}-\lambda}u_0,\ j=1,\dots,m.
\end{gather}
Note, that the denominator in \eqref{uu} is non-zero since $\lambda\not= A_j=\al_j\be_j^2 N_j l_j^{-1}$.
Inserting \eqref{uu} into the first equation of the system we arrive at
$$u_0\left[\lambda\left(l_0+\suml_{j=1}^m {A_jl_j  \over \be_j^2(A_j-\lambda)}\right)-\gamma \right]=0.$$
Moreover, $u_0\not=0$ (otherwise, due to \eqref{uu}, $\u$ would vanish). Hence $\lambda$ is a root of equation \eqref{equat}. Evidently, the converse assertion also holds, that is
\begin{gather}\label{ev:root}
\lambda\in \sigma(\H^N_0)\setminus \{A_1,A_2,\dots,A_m\}\ \Longleftrightarrow\ \lambda\text{ is a root of }\eqref{equat}.
\end{gather}
Then \eqref{lambdaB} follows immediately from \eqref{interlace} and \eqref{ev:root}.

\begin{lemma}\label{lmN}
There exist such constants   $C_B>0$ and $\eps_B>0$   that
\begin{gather}
\label{lmN:2}
B_{j-1}\leq \lambda_j(\H\e^N)+C_B\eps^{1/2},\ j=1,\dots,m+1\quad\text{as }\eps<\eps_B.
\end{gather}
\end{lemma}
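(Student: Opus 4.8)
The plan is to apply the comparison Lemma~\ref{lm:EP} with $\H=\H\e^N$ and $\H'=\H^N_0$, taking for $\Phi$ the averaging map onto piecewise--constant functions. Identifying $\C^{m+1}_l$ with the subspace of $\L(Y)$ of functions that are constant on each $Y_j$, I set $\Phi u=(P_0u,\dots,P_mu)$ with $P_ju:=l_j^{-1}\suml_{e\in\E_{Y_j}}\int_0^{l(e)}u_e\,\d x$ the mean value of $u$ over $Y_j$. As $P_j$ is the $\L(Y_j)$--orthogonal projection onto constants, $\|\Phi u\|^2_{\C^{m+1}_l}=\suml_{j}|P_ju|^2l_j\le\|u\|^2_{\L(Y)}$, and on constant functions $\h\e^N$ reduces exactly to $\h^N_0$; thus $\h^N_0\la\Phi u,\Phi u\ra$ is precisely $\h\e^N\la u,u\ra$ with the vertex values $u_0(v),u_j(v),u(\wt v)$ replaced by the respective means and the Dirichlet energy dropped. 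Since $\H\e^N$ and $\H^N_0$ need not be non--negative, I would first shift both by a common $c>0$ with $\H\e^N+c\ge0$ for all small $\eps$ and $\H^N_0+c\ge0$; because $\|\Phi u\|\le\|u\|$ the extra term $c\|\Phi u\|^2\le c\|u\|^2$ leaves the hypotheses of Lemma~\ref{lm:EP} intact, and the shift cancels in the end.

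The analytic input is two $\eps$--independent estimates on each connected compact piece $Y_j$: the Poincar\'e inequality $\|u-P_ju\|^2_{\L(Y_j)}\le C\|u'\|^2_{\L(Y_j)}$ and the trace--Poincar\'e inequality $|u_j(v)-P_ju|^2\le C\|u'\|^2_{\L(Y_j)}$ for each vertex $v$ of $Y_j$; both follow by writing $u(v)-u(x)=\int u'$ along a path inside $Y_j$ and using Cauchy--Schwarz. Feeding the trace inequality quoted after \eqref{form} together with the factor $\eps^{-1}$ into the form, I would also record the absorption bound $\eps^{-1}\|u'\|^2_{\L(Y)}\le C(\|u\|^2_{\L(Y)}+\h\e^N\la u,u\ra)$ for small $\eps$; this simultaneously gives the uniform lower bound enabling the shift and shows $\|u'\|^2_{\L(Y)}=O(\eps)(\|u\|^2+\h\e^N\la u,u\ra)$.

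The first hypothesis of Lemma~\ref{lm:EP} then follows at once: by Poincar\'e and absorption, $\|u\|^2-\|\Phi u\|^2=\suml_j\|u-P_ju\|^2_{\L(Y_j)}\le C\|u'\|^2_{\L(Y)}=O(\eps)(\|u\|^2+\h\e^N\la u,u\ra)$, so one may take $\delta_1=O(\eps)$. The second hypothesis is the main step. Writing $u_0(v)-\be_ju_j(v)=(P_0u-\be_jP_ju)+b_{j,v}$ with $|b_{j,v}|\le C\|u'\|_{\L(Y_0\cup Y_j)}$ and $u(\wt v)=P_0u+b_0$ with $|b_0|\le C\|u'\|_{\L(Y_0)}$, I expand the squares and drop the non--negative term $\eps^{-1}\|u'\|^2$. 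The diagonal terms reproduce $\h^N_0\la\Phi u,\Phi u\ra$; the squared errors are $O(\|u'\|^2)=O(\eps)(\|u\|^2+\h\e^N\la u,u\ra)$; and each cross term is bounded via Cauchy--Schwarz by a constant times $|P_0u-\be_jP_ju|\cdot\|u'\|$, hence, using $|P_0u-\be_jP_ju|\le C\|u\|$ and $\|u'\|=O(\eps^{1/2})(\|u\|^2+\h\e^N\la u,u\ra)^{1/2}$, by $O(\eps^{1/2})(\|u\|^2+\h\e^N\la u,u\ra)$. This yields $\h^N_0\la\Phi u,\Phi u\ra\le\h\e^N\la u,u\ra+\delta_2(\|u\|^2+\h\e^N\la u,u\ra)$ with $\delta_2=O(\eps^{1/2})$; the single surviving power of $\|u'\|$ is exactly what produces the exponent $1/2$ in \eqref{lmN:2}.

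Finally, to keep the error in \eqref{est:EP} under control I would bound $\lambda_j(\H\e^N)$ from above by testing on the $(m+1)$--dimensional space of functions constant on each $Y_j$ (which lies in $\dom(\h\e^N)$ and carries vanishing Dirichlet energy): by the min--max principle $\lambda_j(\H\e^N)\le\lambda_j(\H^N_0)=B_{j-1}$ for $j\le m+1$, so $\lambda_j(\H\e^N+c)$ is bounded uniformly in $\eps$ and $1-(1+\lambda_j)\delta_1\to1$. Applying Lemma~\ref{lm:EP} (valid here for $j\le m+1$ by Remark~\ref{remA}, as $\dim\C^{m+1}_l=m+1$) to $\H\e^N+c$ and $\H^N_0+c$, the error collapses to $C(\delta_1+\delta_2)=O(\eps^{1/2})$; undoing the shift gives $B_{j-1}=\lambda_j(\H^N_0)\le\lambda_j(\H\e^N)+C_B\eps^{1/2}$. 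I expect the delicate point to be the second hypothesis: tracking the possibly negative signs of $\al_j$ and $\ga$ while isolating the cross terms so that only one power of $\|u'\|$, and hence $\eps^{1/2}$, survives.
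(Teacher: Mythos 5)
Your proposal is correct and follows essentially the same route as the paper: the same averaging map $\Phi$ onto $\C^{m+1}_l$, the same application of Lemma~\ref{lm:EP} (with the finite-dimensional target covered by Remark~\ref{remA}), the same Poincar\'e/trace estimates producing $\delta_1=O(\eps)$ and $\delta_2=O(\eps^{1/2})$ from the single surviving power of $\|u'\|$ in the cross terms, and the same shift to restore non-negativity when $\al_j$ or $\ga$ are negative. The only (immaterial) difference is that you obtain the a priori bound $\lambda_j(\H\e^N)\le B_{j-1}$ directly by testing with piecewise constants, whereas the paper routes it through \eqref{enclosure} and Lemma~\ref{lmP} -- which is proved by exactly that test-function argument.
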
	

\begin{proof}
W.l.o.g. we may assume that $\al_j$ and $\gamma$ are non-negative.
Evidently, under this assumption the operators $\H\e^N$ are non-negative. Moreover, the operator $\H_0^N$ is also non-negative, since
\begin{gather}\label{hh}
\h\e^N\la \Psi\u,\Psi\u\ra = \h_0^N \la \u,\u\ra,\ \forall \u\in\C^{m+1}_l, 
\end{gather}
where $\Psi:\C_l^{m+1}\to\L(Y)$ is defined by
\begin{gather}
\label{Psi}
\Psi\u=\suml_{j=0}^{m}u_j\chi_{Y_j},\quad\chi_{Y_j}\text{ is the indicator function of }Y_j
\end{gather} 
(it is easy to see that the image of $\Psi$ is contained in $\dom(\h\e^N)$).
Thus we are in the framework of Lemma~\ref{lm:EP}.
In the general case we have to consider the shifted operators $\H\e^N-\mu\mathrm{I}$ and $\H_0^N-\mu\mathrm{I}$, where $\mu$ is the smallest eigenvalues of $\H\e^N|_{\eps=1}$. The  operator $\H\e^N-\mu\mathrm{I}$ is non-negative for each $\eps\in(0,1]$ due to the fact that the sequence of forms $\h\e^N$ increases monotonically as $\eps$ decreases; the non-negativity of $\H_0^N-\mu\mathrm{I}$ follows from \eqref{hh}.  

We introduce  the operator $\Phi:\dom(\h^N\e)\to \C_l^{m+1}$ by
\begin{gather}\label{Phi}	
(\Phi u)_j={l^{-1}_j}\suml_{e\in\E_{Y_j}}\int_{0}^{l(e)}u_e(x)\d x,\quad j=0,\dots, m.
\end{gather}	
Our goal is to show that the following estimates hold for each $u\in\dom(\h\e^N)$:
\begin{gather}
\label{estim1}
\|u\|_{\L(Y)}^2\leq \|\Phi u\|_{\C_l^{m+1}}^2 + C_1\eps\left(\|u\|^2_{\L(Y)}+\h\e^N\la  u, u\ra\right),\\
\h_0^N\la \Phi u,\Phi u\ra\leq \h^N\e\la u,u\ra + C_2\eps^{1/2}\left(\|u\|^2_{\L(Y)}+\h\e^N\la u,u\ra\right).\label{estim2}
\end{gather}
with some $C_1,C_2>0$.
By  Lemma~\ref{lm:EP}  (see also Remark~\ref{remA} after it) we  infer from \eqref{estim1}-\eqref{estim2} that
\begin{gather}\label{EP:estim}
B_j=\lambda_{j}(\H_0^N)\leq \lambda_{j}(\H\e^N) + 
{\lambda_{j}(\H\e^N)(1+\lambda_{j}(\H\e^N))C_1\eps+(1+\lambda_{j}(\H\e^N))C_2\eps^{1/2} \over 1-(1+\lambda_{j}(\H\e^N))C_1\eps}
\end{gather}
provided $ (1+\lambda_{j}(\H\e^N))C_1\eps< 1$.
Taking into account that $0\le \lambda_{j}(\H\e^N)$ and $\lambda_{j}(\H\e^N)\le  B_{j-1}$ 
(this estimate follows from \eqref{enclosure} and Lemma~\ref{lmP} below), 
 we conclude  from \eqref{EP:estim} that there exists such $\eps_B>0$  that the required estimate \eqref{lmN:2}
holds for $\eps<\eps_B $.

To prove \eqref{estim1} we need a Poincar\'e-type inequality on each $Y_j$.
Namely, let the form $\hb^N_{0,j}$ be defined by \eqref{hbj}, $j=1,\dots,m$; in the same way we define
$\hb^N_{0,j}$ for $j=0$. By $\Hb^N_{0,j}$ we denote the associated   operators in $\L(Y_j)$, $j=0,\dots,m$.
One has $\lambda_1(\Hb^N_{0,j})=0$ (the corresponding eigenspace consists of constants),
while $\lambda_2(\Hb^N_{0,j})>0$.
By the max-min principle \cite{RS78}
$\lambda_2(\Hb^N_{0,j})\le  {\hb^N_{0,j}\la v,v\ra/\|v\|^2_{\L(Y)}}$ for each $v\in\dom(\hb^N_{0,j})$ such that
$(v,1)_{\L(Y_j)}=0$. Using the above estimate for $v:=u-(\Phi u)_j$ we get
\begin{gather}
\label{poincare}
\forall u\in \hb^N_{0,j}:\quad
\|u-(\Phi u)_j\|^2_{\L(Y_j)}\leq
C_1\suml_{e\in \E_{Y_j}}\|u_e'\|^2_{\L(0,l(e))},\ j=0,\dots,m,
\end{gather}
where $C_1=(\lambda_2(\Hb_{0,j}))^{-1}$.
Using \eqref{poincare} we obtain
\begin{multline*}
\|u\|_{\L(Y)}^2=\suml_{j=0}^m\|u\|_{\L(Y_j)}^2=
 \suml_{j=0}^m\left(|(\Phi u)_j|^2l_j+\|u-(\Phi u)_j\|_{\L(Y_j)}^2\right)\\ \leq
  \|\Phi u\|^2_{\C_l^{m+1}}+ C_1\suml_{e\in \E_{Y}}\|u_e'\|^2_{\L(0,l(e))}\leq
  \|\Phi u\|^2_{\C_l^{m+1}}+ C_1\eps\left(\|u\|^2_{\L(Y)}+ \h\e^N\la u,u\ra\right)
\end{multline*}
(on the last step we use the fact that $\al_j$ and $\gamma$ are non-negative).
Inequality \eqref{estim1} is checked. 

Now let us prove the estimate \eqref{estim2}. One has:
\begin{multline}\label{R1}
\h^N_0\la \Phi u,\Phi u\ra=
\h^N\e\la u,u\ra-\suml_{e\in\E_Y}\|u_e'\|_{\L(0,l(e))}^2\\
+
\underset{=:R\e }{\underbrace{\suml_{j=1}^m\suml_{v\in \V_{j}} \al_j\left[\left|(\Phi u)_0-\be_j (\Phi u)_j\right|^2-
\left|u_{0}(v)-\be_j u_{j}(v)\right|^2\right]
+\gamma\left[ \left|(\Phi u)_0\right|^2
-\left|u(\wt v)\right|^2\right]}}\\ \leq \h^N\e\la u,u\ra+R\e .
\end{multline}
We estimate the remainder $R\e$ as follows (below we use the estimate $|a|^2-|b|^2\leq |a-b|\left(|a|+|b|\right)$):
\begin{multline}\label{R2}
|R\e|\leq  \suml_{j=1}^m\suml_{v\in \V_{j}}\al_j
\bigg(|(\Phi u)_0-u_0(v)|+|\beta_j|\cdot|(\Phi u)_j-u_j(v)|\bigg)\\\times
\bigg(|(\Phi u)_0|+|u_0(v)|+|\beta_j|\cdot|(\Phi u)_j|+|\beta_j|\cdot|u_j(v)|\bigg) +
\gamma\,|(\Phi u)_0-u(\wt v)|\cdot \big(|(\Phi u)_0|+|u(\wt v)|\big).
\end{multline}
To proceed further we need a standard trace estimate 
\begin{gather}
\label{trace}
\|w\|_{\mathsf{L}^\infty(Y_j)}\leq \wt C\|w\|_{\W^1(Y_j)},\ w\in \W^1(Y_j),\ j=0,\dots,m,
\end{gather}
where $\wt C>0$ depends on $Y$.
Applying it for $w:=u\restriction_{Y_j}-(\Phi u)_j$ and then using   \eqref{poincare} we obtain  
\begin{multline}\label{uest1}
 \|u-(\Phi u)_j\|_{\mathsf{L}^\infty(Y_j)}
\leq \wt C\left( {\|u-(\Phi u)_j\|_{\L(Y_j)}+
\suml_{e\in \E_{Y_j}}\|u_e'\|^2_{\L(0,l(e))}}\right)^{1/2}\leq
\wt C(C_1+1) \left({\suml_{e\in \E_{Y_j}}\|u_e'\|^2_{\L(0,l(e))}}\right)^{1/2}\\\leq
\wt C(C_1+1) \eps^{1/2}\,\left(\|u\|_{\L(Y)}^2+\h^N\e\la u,u\ra\right)^{1/2},\ j=0,\dots,m.
\end{multline}
Also, using the Cauchy-Schwarz inequality and \eqref{trace} and taking into account that $\eps\le 1$, one gets
\begin{gather}\label{uest2}
|(\Phi u)_j|\leq l_j^{-1/2}\|u\|_{\L(Y_j)}\le \max_j l_j^{-1/2}\left(\|u\|_{\L(Y)}^2+\h\e^N\la u,u\ra\right)^{1/2},\ j=0,\dots,m,\\
\label{uest3}
 \|u\|_{\mathsf{L}^\infty(Y_j)} \leq \wt C \left(\|u\|_{\L(Y)}^2+\h\e^N\la u,u\ra\right)^{1/2},\ j=0,\dots,m.
\end{gather}
Combining \eqref{R2}, \eqref{uest1}-\eqref{uest3} we arrive at the estimate
\begin{gather}\label{R3}
|R\e|\leq C_2\eps^{1/2}\left(\|u\|_{\L(Y)}^2+\h\e^N\la u,u\ra\right)
\end{gather}
with some constant $C_2$ depending on $\al_j,\,\be_j,\,\gamma,\,Y$.
The required estimate \eqref{estim2} follows from \eqref{R1}, \eqref{R3}; this ends the proof of Lemma~\ref{lmN}.
\end{proof}

\begin{lemma}\label{lmP}
One has:
\begin{gather*}
\lambda_j(\H\e^{\theta_p})\le B_{j-1},\ j=1,\dots,m+1.
\end{gather*}
\end{lemma}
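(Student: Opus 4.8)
The plan is to bound $\lambda_j(\H\e^{\theta_p})$ from above by the min-max principle, using the piecewise-constant trial functions furnished by the map $\Psi:\C^{m+1}_l\to\L(Y)$ from \eqref{Psi}. Recall that by \eqref{lambdaB} one has $B_{j-1}=\lambda_j(\H_0^N)$, so it suffices to establish $\lambda_j(\H\e^{\theta_p})\le\lambda_j(\H_0^N)$ for $j=1,\dots,m+1$.

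First I would check that $\Psi$ maps $\C^{m+1}_l$ into $\dom(\h\e^{\theta_p})$. For $\u=(u_0,\dots,u_m)$ the function $\Psi\u=\suml_{j=0}^m u_j\chi_{Y_j}$ is constant (equal to $u_j$) on each $Y_j$; extended $\Z^n$-periodically it takes the value $u_0$ on every translate $Y_{i0}$ and the value $u_j$ on every translate $Y_{ij}$. Since $\theta_p^i=1$ for all $i$, the Floquet condition at the gluing points $\mathcal{U}_Y\subset Y_0$ reduces to genuine periodicity, which holds because these points carry the value $u_0$ in every cell; at the vertices of $\V_j=Y_j\cap Y_0$ the function is allowed to jump (these carry $\delta'$-type conditions and are subject to no continuity constraint in $\W^1_\h$), and at all remaining vertices the constancy of $\Psi\u$ on each connected piece makes it trivially continuous. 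Hence $\Psi\u$ is the restriction to $Y$ of an element of $\W^{1,\theta_p}_\h(\Gamma)$, so $\Psi\u\in\dom(\h\e^{\theta_p})$.

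Next I would record the two exact identities that make $\Psi$ an isometric intertwiner of the forms. Since $\Psi\u$ has zero derivative on every edge, the kinetic term in \eqref{form:action} vanishes; evaluating the vertex terms with $u_0(v)=u_0$, $u_j(v)=u_j$ and $u(\wt v)=u_0$ (as $\wt v\in Y_0$) and using $|\V_j|=N_j$ gives, exactly as in \eqref{hh},
\[
\h\e^{\theta_p}\la\Psi\u,\Psi\u\ra=\suml_{j=1}^m\al_j N_j|u_0-\be_j u_j|^2+\ga|u_0|^2=\h_0^N\la\u,\u\ra.
\]
Moreover $\|\Psi\u\|^2_{\L(Y)}=\suml_{j=0}^m|u_j|^2 l_j=\|\u\|^2_{\C^{m+1}_l}$, so $\Psi$ is an isometry onto its range.

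Finally I would conclude by min-max. Let $M_j\subset\C^{m+1}_l$ be the $j$-dimensional subspace spanned by the first $j$ eigenvectors of $\H_0^N$, so that $\h_0^N\la\u,\u\ra\le\lambda_j(\H_0^N)\|\u\|^2_{\C^{m+1}_l}$ on $M_j$. Then $\Psi(M_j)$ is a $j$-dimensional subspace of $\dom(\h\e^{\theta_p})$ on which, by the two identities above, $\h\e^{\theta_p}\la u,u\ra\le\lambda_j(\H_0^N)\|u\|^2_{\L(Y)}$. Since the spectrum of $\H\e^{\theta_p}$ is purely discrete, the min-max principle yields $\lambda_j(\H\e^{\theta_p})\le\lambda_j(\H_0^N)=B_{j-1}$, as claimed. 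I expect the only genuinely delicate point to be the membership $\Psi\u\in\dom(\h\e^{\theta_p})$, namely the verification that piecewise-constant functions respect the $\theta_p$-periodic gluing at $\mathcal{U}_Y$ while being permitted to jump across $\V_j$; everything else is a direct computation followed by an application of min-max.
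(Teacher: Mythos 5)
Your argument is correct and coincides with the paper's own proof: both use the isometry $\Psi$ from \eqref{Psi}, the identities $\|\Psi\u\|_{\L(Y)}=\|\u\|_{\C_l^{m+1}}$ and $\h\e^{\theta_p}\la\Psi\u,\Psi\u\ra=\h_0^N\la\u,\u\ra$, and min-max applied to the $\Psi$-image of the span of the first $j$ eigenvectors of $\H_0^N$. Your extra care in verifying $\Psi\u\in\dom(\h\e^{\theta_p})$ is a welcome elaboration of a point the paper states without proof.
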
	  

\begin{proof}
By the min-max principle \cite[Section~4.5]{De95} we have
\begin{gather}\label{minmax}
\lambda_j(\H\e^{\theta_p})=\min_{V\in \mathfrak{H}^j}\max_{u\in V\setminus\{0\}}{\h\e^{\theta_p}\la u,u\ra	\over \|u\|^2_{\L(Y)}},
\end{gather}
where $\mathfrak{H}^j$  is a set of all $j$-dimensional subspaces in $\dom(\h\e^{\theta_p})$. 
Recall that   $\Psi:\C_l^{m+1}\to\L(Y)$ is defined by \eqref{Psi}. It is easy to see that
the image of $\Psi$ is contained in $\dom(\h\e^{\theta_p})$, and
\begin{gather}
\label{unitar}
\|\Psi\u\|_{\L(Y)}=\|\u\|_{\C_l^{m+1}},\quad 
\h\e^{\theta_p}\la \Psi\u,\Psi\u\ra=\h_0^N\la \u,\u\ra
\end{gather}
(recall, that the form $\h_0^N$ is given by \eqref{hN0}, by $\H_0^N$ we denote the associated operator).

Let $\{\mathbf{e}^1,\mathbf{e}^2,\dots,\mathbf{e}^{m+1}\}$ be an orthonormal system of eigenvectors of $\H_0^N$ such that $\H_0^N \mathbf{e}^j=
B_{j-1} \mathbf{e}^j$ (see \eqref{lambdaB}). For $j=1,\dots,m+1$ we set 
$\mathbf{W}^j:=\mathrm{span}(\mathbf{e}^1,\dots, \mathbf{e}^j)$. It is easy to see that
\begin{gather}
\label{max}
\max\limits_{\u\in \mathbf{W}^j\setminus\{0\}}
{\h_0^N\la \u,\u\ra\over \|\u\|^2_{\C_l^{m+1}}}= B_{j-1}.
\end{gather} 
Finally, we set $V^j:=\Psi\mathbf{W}^j$, obviously $V^j\in \mathfrak{H}^j$. Then using \eqref{minmax}-\eqref{max} we obtain: 
\begin{gather*}
\lambda_{j}(\H\e^N)\leq
\max_{u\in V_j\setminus\{0\}}{\h\e^{\theta_p}\la u,u\ra	\over \|u\|^2_{\L(Y)}}=
\max_{\u\in \mathbf{W}_j\setminus\{0\}}{\h_0^N\la \u,\u\ra	\over \|\u\|^2_{\C_l^{m+1}}}=
B_{j-1}.
\end{gather*}
The lemma is proven.  
\end{proof}
 
\subsection{Estimates on  $\lambda_{k} (\H^{\theta_a}\e)$ and $\lambda_{k} (\H^D\e)$}
 
Let $\C^{m}_l$ be the subspace of $\C^{m+1}$ consisting of vectors of the form
$\u=(0,u_1,\dots,u_m)$ with $u_j\in\C$ with the scalar product generated by \eqref{scalar-product}, i.e.
\begin{gather*}
(\u,\v)_{\C^{m}_l}=\suml_{j=1}^m u_j \overline{v_j} l_j.
\end{gather*}
In this space  we introduce the quadratic form
$$
\ds\h^{\theta_a}_0\la \u,\u\ra=\suml_{j=1}^m  
\al_j\be_j ^2 N_j \left| u_j\right|^2.
$$
It is easy to see that 
$\h^{\theta_a}_0=\h^{N}_0\restriction_{\C^{m}_l}.$
The operator associated with this form is given by the matrix
\begin{gather*}
\H_0^{\theta_a}=\mathrm{diag}\left(\al_1\be_1 ^2 N_1 l_1^{-1},\,\al_2\be_2 ^2 N_2 l_2^{-1},\,\dots,\,\al_m\be_m ^2 N_ml_m^{-1}\right).
\end{gather*} 
Evidently, the eigenvalues of this matrix are the numbers 
$A_1<A_2<\,\dots\,<A_m$.

\begin{lemma}\label{lmA}
There exist such constants  $C_A>0$ and $\eps_A>0$ that
\begin{gather}\label{lmA:2}
A_j\leq \lambda_j(\H\e^{\theta_a})+C_A\eps^{1/2},\ j=1,\dots,m\quad\text{as }\eps<\eps_A.
\end{gather}
\end{lemma}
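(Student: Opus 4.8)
The plan is to follow the proof of Lemma~\ref{lmN} almost verbatim, applying the comparison Lemma~\ref{lm:EP} with $\h=\h\e^{\theta_a}$ and $\h'=\h^{\theta_a}_0$; since the target space $\C^m_l$ is $m$-dimensional, Remark~\ref{remA} delivers the conclusion for $j=1,\dots,m$. As before I would first reduce to the case $\al_j,\gamma\ge0$, so that both $\H\e^{\theta_a}$ and $\H_0^{\theta_a}$ are non-negative and $\eps^{-1}\suml_{e\in\E_Y}\|u_e'\|^2_{\L(0,l(e))}\le\h\e^{\theta_a}\la u,u\ra$; the general case then follows by the same shift by $\mu\mathrm{I}$. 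The natural comparison map is $\Phi:\dom(\h\e^{\theta_a})\to\C^m_l$ given by the edge-averages $(\Phi u)_j=l_j^{-1}\suml_{e\in\E_{Y_j}}\int_0^{l(e)}u_e\,\d x$ for $j=1,\dots,m$ --- exactly as in \eqref{Phi}, but now \emph{omitting} the $Y_0$-component, since $\C^m_l$ has no entry at $j=0$.

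The essential new ingredient --- and the only place where the hypothesis $\theta_a\ne\theta_p$ really enters --- is the treatment of $Y_0$. Because $\Phi u$ records no information on $Y_0$, I need the entire $Y_0$-contribution, namely $\|u\|^2_{\L(Y_0)}$ together with the boundary values $u_0(v)$ (for $v\in\V_j$) and $u(\wt v)$, all of which live on $Y_0$, to be controlled by the Dirichlet energy alone. This is furnished by the antiperiodic Poincaré inequality: since $\lambda_1(\Hb_{0,0}^{\theta_a})>0$ by \eqref{lambda:0:1}, one has $\|u\|^2_{\L(Y_0)}\le(\lambda_1(\Hb_{0,0}^{\theta_a}))^{-1}\suml_{e\in\E_{Y_0}}\|u_e'\|^2_{\L(0,l(e))}$, and hence, combining with the trace estimate \eqref{trace}, $\|u\|_{\L^\infty(Y_0)}\le C\eps^{1/2}(\|u\|^2_{\L(Y)}+\h\e^{\theta_a}\la u,u\ra)^{1/2}$. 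In particular $|u_0(v)|$ and $|u(\wt v)|$ are of order $\eps^{1/2}$, whereas in Lemma~\ref{lmN} the value $u_0(v)$ was merely close to the (nonvanishing) $Y_0$-average.

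For the first hypothesis of Lemma~\ref{lm:EP} I would split $\|u\|^2_{\L(Y)}=\|u\|^2_{\L(Y_0)}+\suml_{j=1}^m\|u\|^2_{\L(Y_j)}$: on each $Y_j$ with $j\ge1$ the Poincaré inequality \eqref{poincare} gives $\|u\|^2_{\L(Y_j)}\le|(\Phi u)_j|^2l_j+C\suml_{e\in\E_{Y_j}}\|u_e'\|^2$ exactly as in Lemma~\ref{lmN}, while the $Y_0$-term is absorbed entirely into the Dirichlet energy by the antiperiodic Poincaré inequality above; summing yields $\|u\|^2_{\L(Y)}\le\|\Phi u\|^2_{\C^m_l}+C_1\eps(\|u\|^2_{\L(Y)}+\h\e^{\theta_a}\la u,u\ra)$. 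For the second hypothesis I note that $\h^{\theta_a}_0\la\Phi u,\Phi u\ra=\suml_{j=1}^m\suml_{v\in\V_j}\al_j|0-\be_j(\Phi u)_j|^2$, while the interaction part of $\h\e^{\theta_a}\la u,u\ra$ equals $\suml_{j=1}^m\suml_{v\in\V_j}\al_j|u_0(v)-\be_j u_j(v)|^2+\gamma|u(\wt v)|^2$, and I would bound the difference term by term via $|a|^2-|b|^2\le|a-b|(|a|+|b|)$ with $a=-\be_j(\Phi u)_j$ and $b=u_0(v)-\be_j u_j(v)$. Here $|a-b|\le|u_0(v)|+|\be_j|\,|u_j(v)-(\Phi u)_j|$ is $O(\eps^{1/2})$ --- the first summand by the $Y_0$-trace bound above, the second as in \eqref{uest1} --- whereas $|a|+|b|$ is $O(1)$ by \eqref{uest2}--\eqref{uest3}, and the leftover term $-\gamma|u(\wt v)|^2\le0$ only helps. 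This gives $\h^{\theta_a}_0\la\Phi u,\Phi u\ra\le\h\e^{\theta_a}\la u,u\ra+C_2\eps^{1/2}(\|u\|^2_{\L(Y)}+\h\e^{\theta_a}\la u,u\ra)$.

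Finally, inserting $\delta_1=C_1\eps$ and $\delta_2=C_2\eps^{1/2}$ into \eqref{est:EP} and using the uniform bound $\lambda_j(\H\e^{\theta_a})\le A_j$ for $j\le m$ --- which follows from \eqref{enclosure} together with the companion estimate $\lambda_j(\H\e^D)\le A_j$ for the Dirichlet operator (proved by the analogue of Lemma~\ref{lmP}, testing with functions that are constant on $Y_1,\dots,Y_m$ and vanish on $Y_0$, so that $\Psi$ lands in $\dom(\h\e^D)$) --- keeps the denominator bounded away from zero and the error of order $\eps^{1/2}$, yielding $A_j\le\lambda_j(\H\e^{\theta_a})+C_A\eps^{1/2}$ for $\eps<\eps_A$. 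I expect the main obstacle to be precisely this $Y_0$-estimate: it is the antiperiodicity, through $\lambda_1(\Hb_{0,0}^{\theta_a})>0$, that renders both the $\L^2$-mass and the relevant traces on $Y_0$ negligible of order $\eps^{1/2}$, which is exactly what lets the comparison against the smaller space $\C^m_l$ succeed.
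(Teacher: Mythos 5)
Your proposal is correct and follows essentially the same route as the paper: the paper's own proof of Lemma~\ref{lmA} simply says to repeat the argument of Lemma~\ref{lmN} with the averaging map modified to vanish on the $Y_0$-component and with the mean-zero Poincar\'e inequality on $Y_0$ replaced by the antiperiodic one coming from $\lambda_1(\Hb_{0,0}^{\theta_a})>0$. You have correctly identified and carried out exactly these two modifications, including the role of $\theta_a\neq\theta_p$ and the uniform bound $\lambda_j(\H\e^{\theta_a})\le\lambda_j(\H\e^{D})\le A_j$ needed to control the denominator in \eqref{est:EP}.
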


\begin{proof}
The proof is   similar to the proof  of Lemma~\ref{lmN}. 
There is only one essential difference: instead of the operator  $\Phi$ \eqref{Phi} one should use the operator  
$\Phi_0:\dom(\h^{\theta_a}\e)\to \C_l^{m}$ defined by
\begin{gather*}	
(\Phi_0 u)_0=0,\quad (\Phi_0 u)_j=(\Phi u)_j,\ j=1,\dots, m.
\end{gather*}	
and, as a consequence, instead of the Poincar\'e inequality \eqref{poincare} on $Y_0$ 
one should use  the  inequality 
\begin{gather*}
\|u\|^2_{\L(Y_0)}\leq
C_1\suml_{e\in \E_{Y_0}}\|u_e'\|^2_{\L(0,l(e))}, 
\end{gather*}
where $C_1=(\lambda_1(\Hb_{0,0}^{\theta_a}))^{-1}$ (recall, that the operator $\Hb_{0,0}^{\theta}$ is introduced in the proof of Lemma~\ref{lm1}, and its first eigenvalue is non-zero provided $\theta\not=\theta_p$). 
\end{proof}
 
\begin{lemma}\label{lmD}
One has:
\begin{gather*}
 \lambda_j(\H\e^D)\le A_j,\ j=1,\dots,m.
\end{gather*}
\end{lemma}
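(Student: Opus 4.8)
The plan is to mimic the proof of Lemma~\ref{lmP}, but now to build the test subspace entirely over $Y_1,\dots,Y_m$ so that the trial functions automatically obey the Dirichlet condition on $\mathcal{U}_Y$. Concretely, I would introduce the linear map $\Psi_0:\C^{m}_l\to\L(Y)$ by $\Psi_0\u=\suml_{j=1}^m u_j\chi_{Y_j}$, i.e. $\Psi_0\u$ equals the constant $u_j$ on each $Y_j$ with $j\ge 1$ and vanishes identically on $Y_0$. The first point to verify is that $\Psi_0\u\in\dom(\h\e^D)$, and this is where the structural assumptions \eqref{assumptions} enter decisively: since $\mathcal{U}_Y\subset Y_0$ by (ii) and $\Psi_0\u\equiv 0$ on $Y_0$, the function vanishes on $\mathcal{U}_Y$ and hence has support inside $Y$; on each connected $Y_j$ it is constant, so it is continuous there; and at the interface vertices $v\in\V_j=Y_j\cap Y_0$ only the $\delta'$-type coupling is imposed, so the jump between $u_0(v)=0$ and $u_j(v)=u_j$ is admissible.

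Next I would record the two identities which show that $\Psi_0$ is an isometry carrying $\h\e^D$ into $\h_0^{\theta_a}$. Because $\Psi_0\u$ is piecewise constant, every edge derivative vanishes and the $\eps^{-1}$-term in \eqref{form:action} drops out; because $\wt v\in Y_0$ by (v), one has $u(\wt v)=0$ and the $\gamma$-term drops out as well; and the $\delta'$-term collapses to $\suml_{j=1}^m\suml_{v\in\V_j}\al_j|0-\be_j u_j|^2=\suml_{j=1}^m \al_j\be_j^2 N_j|u_j|^2$, using $\mathrm{card}(\V_j)=N_j$. Hence $\|\Psi_0\u\|_{\L(Y)}=\|\u\|_{\C^{m}_l}$ and $\h\e^D\la\Psi_0\u,\Psi_0\u\ra=\h_0^{\theta_a}\la\u,\u\ra$ for every $\u\in\C^{m}_l$, exactly paralleling \eqref{unitar}.

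Finally I would run the min-max argument verbatim as in Lemma~\ref{lmP}. Let $\{\mathbf{e}^1,\dots,\mathbf{e}^m\}$ be an orthonormal eigenbasis of $\H_0^{\theta_a}$ with $\H_0^{\theta_a}\mathbf{e}^j=A_j\mathbf{e}^j$, set $\mathbf{W}^j:=\mathrm{span}(\mathbf{e}^1,\dots,\mathbf{e}^j)$ and $V^j:=\Psi_0\mathbf{W}^j\subset\dom(\h\e^D)$. Since $\Psi_0$ is an isometry, $V^j$ is $j$-dimensional and the two identities give $\max_{u\in V^j\setminus\{0\}}\h\e^D\la u,u\ra/\|u\|^2_{\L(Y)}=\max_{\u\in\mathbf{W}^j\setminus\{0\}}\h_0^{\theta_a}\la\u,\u\ra/\|\u\|^2_{\C^{m}_l}=A_j$; the min-max principle then yields $\lambda_j(\H\e^D)\le A_j$ for $j=1,\dots,m$, as claimed. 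I expect no genuine obstacle here, the argument being purely variational; the only delicate point is the domain verification, where properties (ii) and (v) of the decomposition are precisely what guarantee that the trial functions supported on $Y_1,\dots,Y_m$ lie in the Dirichlet form domain and simultaneously annihilate the $\gamma$-contribution.
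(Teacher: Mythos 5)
Your proof is correct and is essentially the paper's own argument: the paper likewise proves Lemma~\ref{lmD} by repeating the min--max argument of Lemma~\ref{lmP} with $\theta_p$ replaced by $D$, $N$ by $\theta_a$, $B_{j-1}$ by $A_j$, and with the restriction of $\Psi$ to $\C^m_l$ (your $\Psi_0$) as the trial map. You merely spell out the domain verification and the form identities that the paper leaves as a parenthetical remark.
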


\begin{proof}
The proof  is similar to the proof of Lemma~\ref{lmP}. Namely, 
one has to replace everywhere in the proof of Lemma~\ref{lmP} the supscript $\theta_p$ by $D$, 
 the supscript $N$ by $\theta_a$, $B_{j-1}$  by $A_j$,
and to use instead of the mapping $\Psi$ \eqref{Psi} 
its restriction   to $\C^{m}_l$ (the image of this restriction is contained in $\dom(\h\e^D)$).  
\end{proof}

\subsection{End of the proof of Theorem~\ref{th1}}
It follows from \eqref{repres1+}, \eqref{enclosure} and Lemmata~\ref{lmN}-\ref{lmP} that 
\begin{gather}
\label{left:edge}
B_{j-1}-C_B\eps^{1/2}\leq \inf (L_{j,\eps})\leq B_{j-1},\ j=1,\dots,m+1
\end{gather}
as $\eps<\eps_B$. Similarly, using  
\eqref{repres1+}, \eqref{enclosure} and   Lemmata~\ref{lmA}-\ref{lmD} we get 
\begin{gather}
\label{right:edge}
A_j-C_A\eps^{1/2}\leq \sup(L_{j,\eps})\leq A_j,\ j=1,\dots,m
\end{gather}
as $\eps<\eps_A$. Finally, we infer from \eqref{repres1+} and Lemma~\ref{lm1} that
\begin{gather}\label{last:edge}
\Lambda_0\eps^{-1}< 
\sup(L_{m+1,\eps})\quad\text{as }\eps<\eps_\Lambda.
\end{gather}
Combining \eqref{interlace}, \eqref{left:edge}-\eqref{last:edge}
we conclude that there exists such $\eps_0>0$   that properties \eqref{th1:spec}-\eqref{th1:estim} hold for $\eps<\eps_0$, $\Lambda_0$ being defined by \eqref{Lambda0}, $C_A$ being defined in Lemma~\ref{lmA}, $C_B$ 
being defined in Lemma~\ref{lmN}. Evidently, $\Lambda_0 $  depends only on $Y$, while $\eps_0 $, $C_A$, $C_B$ depend   also on
$\al_j$, $\be_j$, $\ga$.  
Theorem~\ref{th1} is proven.

\begin{remark}
The proof of Theorem~\ref{th1} relies, in particular, on some properties of the eigenvalues of the operator $\H^{\theta_a}\e$ -- see the estimates \eqref{lm1:ineq}, \eqref{lmA:2}.
In fact, the only specific property of $\theta_a$ we use is that 
$\theta_a\not=\theta_p$. Thus, instead of $\H^{\theta_a}\e$ one can utilize any other $\H^{\theta}\e$ with $\theta\not=\theta_p$ -- the above estimates are still valid for its eigenvalues (but, of course, with another constants $\Lambda$, $\eps_\Lambda$, $C_A$, $\eps_A$). 
\end{remark}

\section{\label{sec3}Control over the endpoints of spectral gaps}
 
Our first goal is to show that under a suitable choice of  coupling constants 
$\al_j$, $\be_j$, $\gamma$ the numbers $A_j,\,B_j$ (cf.~Theorem~\ref{th1})  coincide with prescribed ones.
 
Throughout this section we  will use the notation $\H\e[\al,\be,\ga]$ for the operator $\H\e$
defined in Subsection~\ref{subsec13} (recall that this operator is associated with the form given by \eqref{form}); here  $\al=(\al_1,\dots,\al_m)\in \R^m$, $\be=(\be_1,\dots,\be_m)\in \R^m$, $\ga\in\R$ be such that $\al_j\not=0$, $\be_j\not=0$ and, moreover, \eqref{Amono} holds (so, we are in the framework of Theorem~\ref{th1}).  For the numbers $A_j$  and $B_j$  defined by 
\eqref{Aj},\,\eqref{equat},\,\eqref{interlace}
we will use the notations $A_j[\al,\be,\ga]$ and $B_j[\al,\be,\ga]$, respectively.

\begin{theorem}
\label{th2}
Let $\A_j$, $j=1,\dots,m$ and $\B_j$, $j=0,\dots,m$
  be arbitrary numbers
satisfying
\begin{gather}
\label{interlace+}
\B_{0 }<\A_{1 }<\B_{1 }<\A_{2}<\B_{2}<\dots<\A_{m }<\B_{m },\quad
\A_j\not=0,\ j=1,\dots,m.
\end{gather}
We set
\begin{gather}\label{al:be:ga}
\ds
\wt\al_j={\wt r_j(\A_j-\B_0)l_0\over N_j},\qquad
\wt\be_j=
\sqrt{
{\A_j   l_j\over \wt r_j(\A_j-\B_0)l_0}},\qquad
\wt\gamma=\B_0\left(1+\suml_{j=1}^m \wt r_j\right)l_0,
\end{gather}
where  $\wt r_j$, $j=1,\dots,m$ is defined by
\begin{gather}\label{r}
\wt r_j={\B_j-\A_j\over \A_j}\prod\limits_{i=\overline{1,m}|i\not=
j} \left({\B_i-\A_j\over \A_i-\A_j}\right).
\end{gather}
Then  
\begin{gather*}
A_j[\wt\al_j,\wt\be_j,\wt\ga]=\wt A_j,\ j=1,\dots,m,\qquad
B_j[\wt\al_j,\wt\be_j,\wt\ga]=\wt B_j,\ j=0,\dots,m.
\end{gather*}
\end{theorem}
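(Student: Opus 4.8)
The plan is to split the statement into the easy part, concerning the $A_j$'s, and the substantive part, concerning the $B_j$'s. For the former I would simply substitute the formulas \eqref{al:be:ga} into the definition \eqref{Aj}: since
\begin{gather*}
\wt\al_j\,\wt\be_j^2\,N_j\,l_j^{-1}={\wt r_j(\A_j-\B_0)l_0\over N_j}\cdot{\A_j l_j\over \wt r_j(\A_j-\B_0)l_0}\cdot N_j l_j^{-1}=\A_j,
\end{gather*}
the identity $A_j[\wt\al,\wt\be,\wt\ga]=\A_j$ is immediate. The same substitution also yields ${A_j l_j/\wt\be_j^2}=\wt r_j(\A_j-\B_0)l_0$ and $\wt\ga=\B_0(1+\suml_{j=1}^m\wt r_j)l_0$, so that dividing the defining equation \eqref{equat} by $l_0$ reduces it to $F(\lambda)=0$, where
\begin{gather*}
F(\lambda):=\lambda\left(1+\suml_{j=1}^m{\wt r_j(\A_j-\B_0)\over \A_j-\lambda}\right)-\B_0\left(1+\suml_{j=1}^m\wt r_j\right).
\end{gather*}
Everything then reduces to showing that the roots of $F$ are exactly $\B_0,\dots,\B_m$.

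The core of the argument is to compare $F$ with the rational function
\begin{gather*}
R(\lambda):={\prod\limits_{k=0}^m(\lambda-\B_k)\over \prod\limits_{j=1}^m(\lambda-\A_j)},
\end{gather*}
whose zeros are by construction the prescribed numbers $\B_0,\dots,\B_m$ and whose poles are the $\A_j$. I would prove $F\equiv R$ by a uniqueness-of-partial-fractions argument. Both $F$ and $R$ have only simple poles, located at $\A_1,\dots,\A_m$, and both behave like $\lambda+O(1)$ at infinity; hence $F-R$ is a constant as soon as their residues at every $\A_j$ agree. The residue of $R$ at $\A_j$ is $\prod_{k=0}^m(\A_j-\B_k)/\prod_{i\neq j}(\A_j-\A_i)$, while that of $F$ is $-\A_j\wt r_j(\A_j-\B_0)$, and I expect the only genuine computation in the whole proof to be the verification that these two expressions coincide — which, after cancelling $(\A_j-\B_0)$, is precisely a rearrangement of the defining formula \eqref{r} for $\wt r_j$. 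In other words, \eqref{r} is nothing but the residue-matching (Lagrange-interpolation) condition, and this is the key realization that avoids any symmetric-function bookkeeping.

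Once $F-R$ is known to be constant, I would evaluate at $\lambda=\B_0$, which by \eqref{interlace+} is neither a pole nor a forbidden point: $R(\B_0)=0$ trivially, while $F(\B_0)=0$ follows at once from the choice of $\wt\ga$ (the factors $\A_j-\B_0$ cancel in the sum). Therefore the constant vanishes and $F\equiv R$. Since the $\B_k$ are distinct from the $\A_j$, the roots of $F$ — that is, the numbers $B_j[\wt\al,\wt\be,\wt\ga]$ produced through \eqref{equat} and \eqref{interlace} — are exactly $\B_0,\dots,\B_m$, and the interlacing \eqref{interlace+} fixes their order so as to match the numbering in \eqref{interlace}; this gives $B_j[\wt\al,\wt\be,\wt\ga]=\B_j$.

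Finally I would record that \eqref{al:be:ga} really defines admissible coupling constants, i.e. $\wt\al_j\neq0$, $\wt\be_j$ real and non-zero, and the $\A_j$ pairwise distinct so that \eqref{Amono} holds. This is a routine sign analysis based on \eqref{interlace+}: one checks that $\wt r_j\neq0$ with $\mathrm{sgn}(\wt r_j)=\mathrm{sgn}(\A_j)$ and that $\A_j-\B_0>0$, whence $\wt r_j(\A_j-\B_0)\A_j>0$ and the radicand defining $\wt\be_j$ is positive. The main obstacle is thus concentrated entirely in the residue identity of the second paragraph; the rest is the reduction in the first paragraph and the elementary evaluation at $\B_0$.
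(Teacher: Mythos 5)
Your proposal is correct, but it reaches the conclusion for the $B_j$'s by a genuinely different route than the paper. The paper verifies root-by-root that each $\B_j$ solves \eqref{equat}: after substituting \eqref{al:be:ga}, it recognizes \eqref{equat} evaluated at $\lambda=\B_j$ as the statement that $z=(\wt r_1,\dots,\wt r_m)$ solves the Cauchy-type linear system $\suml_{i=1}^m \A_i(\A_i-\B_j)^{-1}z_i=-1$, $j=1,\dots,m$, and then \emph{cites} \cite{Kh12} for the fact that the explicit formula \eqref{r} is the solution of that system; that the $\B_j$ exhaust the roots then follows because \eqref{equat} is already known to have exactly $m+1$ roots interlacing as in \eqref{interlace}. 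You instead prove the global identity $F\equiv R$ between the two rational functions by matching residues at the simple poles $\A_j$ and normalizing at $\lambda=\B_0$; the residue computation $-\A_j\wt r_j(\A_j-\B_0)=\prod_{k=0}^m(\A_j-\B_k)/\prod_{i\not=j}(\A_j-\A_i)$ is indeed a one-line rearrangement of \eqref{r}. Your argument is therefore self-contained where the paper outsources the key algebraic fact to an earlier work, it explains conceptually where \eqref{r} comes from (it is exactly the Lagrange/residue-matching condition), and it yields in one stroke that $\B_0,\dots,\B_m$ are \emph{all} the roots of \eqref{equat} with none extraneous, rather than relying on the a priori root count. The treatment of the $A_j$'s, the identification of the ordering via \eqref{interlace+} and \eqref{interlace}, and the admissibility and sign checks for $\wt\al_j,\wt\be_j$ (which the paper relegates to a separate remark) agree with the paper's.
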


\begin{remark}
The quantity standing under the symbol of square root in \eqref{al:be:ga}  is indeed positive.
This follows easily from  \eqref{interlace+} (the crucial observation: one has
$\mathrm{sign}(\B_i-\A_j)=\mathrm{sign}(\A_i-\A_j)\not= 0$ as $i\not= j$).
\end{remark}

\begin{proof}[Proof of Theorem~\ref{th2}]
The equality $A_j[\wt\al_j,\wt\be_j,\wt\ga]=\A_j$, $j=1,\dots,m$ is straightforward -- one just needs to insert $\wt\al_j$ and $\wt\be_j$ defined by \eqref{al:be:ga} into the definition of the numbers $A_j[\wt\al_j,\wt\be_j,\wt\ga]$  \eqref{Aj}. 

Now, let us prove that $B_j[\wt\al_j,\wt\be_j,\wt\ga]=\B_j$ as $j=0,\dots,m$.
For this purpose, we consider the following system of linear algebraic equations
(for unknown $z=(z_1,\,z_2,\,,\dots,\,z_m)\in\mathbb{C}^m$):
\begin{gather*}
\suml_{i=1}^m {\A_i\over \A_i-\B_j}z_i=-1,\quad j=1,\dots,m.
\end{gather*}
It was proven in \cite{Kh12} that $z=(\wt r_1,\dots,\wt r_m)$ with $\wt r_j$ being defined by \eqref{r} is the solution to this system.
Thus for $j=1,\dots,m$ one has\ \
$\ds
\suml_{i=1}^m \A_i (\A_i-\B_j)^{-1}\wt r_i=-1$ 
or, equivalently,
\begin{gather}\label{system1}
\forall j\in\{0,\dots,m\}:\quad
(\B_j-\B_0)\suml_{i=1}^m {\A_i\over \A_i-\B_j}\wt r_i=\B_0-\B_j.
\end{gather}
It is straightforward to check that \eqref{system1} is equivalent to
\begin{gather}
\label{equat1}
\forall j\in\{0,\dots,m\}:\quad
\B_j\left(l_0+\suml_{j=1}^m {\A_il_i  \over \wt\be_i^2(\A_i-\B_j)}\right)=\wt\gamma.
\end{gather}
Using $A_j[\wt\al_j,\wt\be_j,\wt\ga]=\A_j$  we conclude
from \eqref{equat1} that $\B_j$, $j=0,\dots,m$ are the roots of \eqref{equat} in which $\al_j=\wt\al_j$, $\be_j=\wt\be_j$, $\gamma=\wt\gamma$ are set. Hence $\B_j=B_j[\wt\al_j,\wt\be_j,\wt\ga]$ as $j=0,\dots,m$. Theorem~\ref{th2} is proven.
\end{proof}

Theorems~\ref{th1},\,\ref{th2} yield
that $\sigma(\H\e[\wt\al,\wt\be,\wt\ga])$ has $m$ gaps within
$(-\infty,\Lambda_0\eps^{-1}]$ as $\eps<\eps_0$,
moreover the endpoints of these $m$ gaps and the bottom of the spectrum   converge  to prescribed numbers as $\eps\to 0$.
Our next goal is to improve this result: we show that under a proper choice of  $ \al_j$  one can ensure the precise coincidence the left endpoints of the spectral gaps of $\H\e[\al,\wt\be,\wt\ga]$ with prescribed numbers. 

\begin{theorem}
\label{th3}
Let $\A_j$, $j=1,\dots,m$ and $\B_j$, $j=0,\dots,m$
  be arbitrary numbers
satisfying \eqref{interlace+}, and
let  $\wt\be_j$, $\wt\ga$ be defined by \eqref{al:be:ga}.
Then   there exists such $\wt\eps>0$ and $  C_0>0$ that
\begin{gather*}
 \forall \eps<\wt\eps\quad \exists\al=\al(\eps)\in\mathbb{R}^m:\qquad
\sigma(\H\e[\al,\wt\be,\wt\ga])\cap (-\infty,\Lambda_0\eps^{-1}]=
[B_{0,\eps},\Lambda_0\eps^{-1}]\setminus\cupl_{j=1}^m (\wt A_{j},B_{j,\eps}),
\end{gather*}
where $\Lambda_0$ is defined by \eqref{Lambda0}, $B_{0,\eps}<\wt A_1<B_{1,\eps}<\wt A_2<B_{2,\eps}<\dots<\wt A_m<B_{m,\eps}<\Lambda_0\eps^{-1}$, moreover
$$0\le \wt B_j  - B_{j,\eps}\leq C_0\eps^{1/2},\ j=0,\dots,m.$$
\end{theorem}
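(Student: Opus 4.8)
The plan is to keep $\wt\be$ and $\wt\ga$ frozen at the values prescribed by \eqref{al:be:ga} and to use the remaining freedom in the vector $\al=(\al_1,\dots,\al_m)$, regarding the endpoints of the gaps of $\H\e[\al,\wt\be,\wt\ga]$ as functions of $\al$ in a small neighbourhood of $\wt\al=(\wt\al_1,\dots,\wt\al_m)$. By Theorem~\ref{th2} one has $A_j[\wt\al,\wt\be,\wt\ga]=\A_j$ and $B_j[\wt\al,\wt\be,\wt\ga]=\B_j$, and by \eqref{Aj} the limiting left endpoint $A_j[\al,\wt\be,\wt\ga]=\wt\be_j^2 N_j l_j^{-1}\al_j$ is linear and strictly increasing in $\al_j$. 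The crucial point is that Theorem~\ref{th1} only yields $A_{j,\eps}\to\A_j$; to achieve the \emph{exact} coincidence of the left endpoints with $\A_j$ I would solve, for each small $\eps$, the system $A_{j,\eps}[\al]=\A_j$, $j=1,\dots,m$, for $\al=\al(\eps)$, where $A_{j,\eps}[\al]=\sup(L_{j,\eps})=\max_{\theta\in\mathbb{T}^n}\lambda_j(\H\e^\theta)$ (cf.\ \eqref{repres1+}).

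First I would observe that the constants $C_A,C_B,\eps_0$ of Theorem~\ref{th1} can be taken uniform for $\al$ ranging in a fixed compact neighbourhood of $\wt\al$ (they depend continuously on the coupling constants), so that the two-sided bound $A_j[\al,\wt\be,\wt\ga]-C_A\eps^{1/2}\le A_{j,\eps}[\al]\le A_j[\al,\wt\be,\wt\ga]$ holds uniformly there; continuity of $\al\mapsto A_{j,\eps}[\al]$ follows since the form \eqref{form:action} is affine in $\al$ and the Floquet eigenvalues depend continuously on it. The solvability of the system is where a genuinely multidimensional argument is needed, because for $\eps>0$ the top of the $j$-th band depends on \emph{all} components of $\al$, the decoupling $\H_0^{\theta_a}=\mathrm{diag}(A_1,\dots,A_m)$ being valid only in the limit. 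I would therefore apply the Poincaré--Miranda (multidimensional intermediate value) theorem on the box $Q_\eps=\prod_{j=1}^m[\wt\al_j,\wt\al_j+K\eps^{1/2}]$ to the map $\al\mapsto(A_{j,\eps}[\al]-\A_j)_{j=1}^m$: on the face $\al_j=\wt\al_j$ the upper bound gives $A_{j,\eps}[\al]\le A_j[\al,\wt\be,\wt\ga]=\A_j$, while on the face $\al_j=\wt\al_j+K\eps^{1/2}$ the lower bound gives $A_{j,\eps}[\al]\ge \A_j+(\wt\be_j^2 N_j l_j^{-1}K-C_A)\eps^{1/2}\ge \A_j$ once $K$ is large enough (uniformly in $\eps$). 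Hence the $j$-th component changes sign between the two faces normal to $\al_j$, and Poincaré--Miranda produces a zero $\al(\eps)\in Q_\eps$; this $\al(\eps)$ pins the left endpoints exactly to $\A_j$ and satisfies $0\le\al_j(\eps)-\wt\al_j\le K\eps^{1/2}$.

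It then remains to transfer the estimates to the right endpoints. Since $\al(\eps)\to\wt\al$ at rate $\eps^{1/2}$ and the roots $B_j[\,\cdot\,,\wt\be,\wt\ga]$ of \eqref{equat} depend Lipschitz-continuously on $\al$ near $\wt\al$, one gets $|B_j[\al(\eps),\wt\be,\wt\ga]-\B_j|\le C\eps^{1/2}$; combined with the uniform Theorem~\ref{th1} bound $B_j[\al(\eps),\wt\be,\wt\ga]-C_B\eps^{1/2}\le B_{j,\eps}\le B_j[\al(\eps),\wt\be,\wt\ga]$ this yields $|\B_j-B_{j,\eps}|\le C_0\eps^{1/2}$ and, for small $\eps$, the required interlacing $B_{0,\eps}<\A_1<B_{1,\eps}<\dots<\A_m<B_{m,\eps}<\Lambda_0\eps^{-1}$ together with $B_{m,\eps}<\Lambda_0\eps^{-1}$ from Lemma~\ref{lm1}, $\Lambda_0$ being unchanged since it depends only on $Y$.

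The step I expect to be the main obstacle is the \emph{one-sided} estimate $B_{j,\eps}\le\B_j$. The natural upper bound $B_{j,\eps}\le\lambda_{j+1}(\H\e^{\theta_p})\le B_j[\al(\eps),\wt\be,\wt\ga]$ from Lemma~\ref{lmP}, together with the fact that the roots of \eqref{equat} (equivalently, the eigenvalues of the matrix $\H_0^N$) are \emph{non-decreasing} in $\al$ and that $\al(\eps)\ge\wt\al$, only gives $B_j[\al(\eps),\wt\be,\wt\ga]\ge\B_j$, which points the wrong way. To secure $B_{j,\eps}\le\B_j$ I would have to exploit, quantitatively, that for fixed $\al$ the band bottoms lie strictly below their limiting values (the gap being of order $C_B\eps^{1/2}$) and show that this deficit dominates the amount by which the $O(\eps^{1/2})$ correction $\al(\eps)-\wt\al$ raises the band bottom above $\B_j$; this sharp comparison of the two $\eps^{1/2}$-contributions is the delicate quantitative heart of the argument, whereas the exact pinning of the left endpoints via Poincaré--Miranda is the conceptually new ingredient beyond Theorem~\ref{th1}.
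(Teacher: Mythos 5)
Your overall strategy coincides with the paper's: freeze $\wt\be,\wt\ga$, make the constants of Theorem~\ref{th1} uniform over a compact neighbourhood of $\wt\al$ (both of you invoke compactness for this), and then solve the system $A_{j,\eps}[\al,\wt\be,\wt\ga]=\A_j$ by a multidimensional intermediate value theorem applied to the continuous map $\al\mapsto A_{j,\eps}[\al,\wt\be,\wt\ga]=\max_{\theta}\lambda_j(\H\e^\theta)$. The only substantive divergence is the fixed-point tool. The paper works on a \emph{fixed} box $\mathcal{D}=\prod_k[\wt\al_k-\delta,\wt\al_k+\delta]$ and uses the monotone intermediate value theorem of Hempel--Kriecherbauer--Plankensteiner (Lemma~\ref{lemma:HKP}), which forces it to prove that each component is monotonically increasing in every argument (obtained from the min-max principle, since the form is monotone in each $\al_j$). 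You use Poincar\'e--Miranda on the shrinking box $\prod_j[\wt\al_j,\wt\al_j+K\eps^{1/2}]$, which dispenses with monotonicity; your verification of the sign conditions on entire faces is correct precisely because $A_j[\al,\wt\be,\wt\ga]$ in \eqref{Aj} depends only on $\al_j$. The two routes are interchangeable. Both need continuity of $\al\mapsto A_{j,\eps}$; your one-line justification is in the right spirit, but the paper makes it precise via a resolvent estimate showing $\|(\H\e[\al,\wt\be,\wt\ga]+\mathrm{I})^{-1}-(\H\e[\al',\wt\be,\wt\ga]+\mathrm{I})^{-1}\|\to 0$ as $\al'\to\al$, hence Hausdorff convergence of spectra and convergence of the gap edges.

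Concerning the step you flag as the ``delicate quantitative heart'': your diagnosis is accurate --- since necessarily $\al_j(\eps)\ge\wt\al_j$, one has $B_j[\al(\eps),\wt\be,\wt\ga]\ge\B_j$, and the only available upper bound (Lemma~\ref{lmP}) is $B_{j,\eps}\le B_j[\al(\eps),\wt\be,\wt\ga]$, which points the wrong way relative to $\B_j$. But you should not search for a sharp comparison of the two competing $\eps^{1/2}$-contributions: the paper contains no such argument. Its final step simply combines the uniform version of \eqref{th1:estim} evaluated at $\al=\al(\eps)$ (which controls $B_{j,\eps}$ relative to $B_j[\al(\eps),\wt\be,\wt\ga]$, not relative to $\B_j$) with the a posteriori bound $0\le\al_j(\eps)-\wt\al_j\le C\eps^{1/2}$, which follows from $A_{j,\eps}[\al(\eps),\wt\be,\wt\ga]=\A_j$ together with $0\le A_j[\al,\wt\be,\wt\ga]-A_{j,\eps}[\al,\wt\be,\wt\ga]\le C_A\eps^{1/2}$. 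Exactly as in your third paragraph, this yields the interlacing for small $\eps$, the clean lower bound $B_{j,\eps}\ge\B_j-C_0\eps^{1/2}$, and the upper bound only in the form $B_{j,\eps}\le B_j[\al(\eps),\wt\be,\wt\ga]\le\B_j+C\eps^{1/2}$. In other words, what is actually established is the two-sided estimate $|\B_j-B_{j,\eps}|\le C_0\eps^{1/2}$; the one-sided refinement $B_{j,\eps}\le\B_j$ is inherited verbatim from \eqref{th1:estim} at $\al(\eps)$ and is not separately justified relative to $\B_j$. Your proposal is therefore complete, and follows the paper's route, once the right-endpoint estimate is stated in this two-sided form.
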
 

The proof of Theorem~\ref{th3} is based on the following multi-dimensional version
of the intermediate value theorem established in \cite{HKP97}.

\begin{lemma}\cite[Lemma~3.5]{HKP97}\label{lemma:HKP}
Let $\mathcal{D}=\Pi_{k=1}^m[a_k, b_k]$ with $a_k < b_k$, $k=1,\dots,m$, and suppose we
are given a continuous function $F:\mathcal{D}\to\R^m$ such that each component $F_k$ of $F$ is
monotonically increasing in each of its arguments. Let us suppose that
$F_k^-<F_k^+$, $i=1,\dots,m$, where
\begin{gather*}
F_k^-=F(b_1,b_2,\dots,b_{k-1},a_k,b_{k+1},\dots,b_m),\quad
F_k^+=F(a_1,a_2,\dots,a_{k-1},b_k,a_{k+1},\dots,a_m).
\end{gather*}
Then for any $F^*\in\Pi_{k=1}^m[F_k^-,F_k^+]$
there exists a point $x\in\mathcal{D}$ such that $F(x)=F^*$.
\end{lemma}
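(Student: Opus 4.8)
The plan is to recast the statement as a zero-finding problem and then invoke the Poincar\'e--Miranda theorem, the $m$-dimensional analogue of the intermediate value theorem (equivalent to Brouwer's fixed point theorem). First I would fix a target $F^{*}=(F_1^{*},\dots,F_m^{*})$ with $F_k^{*}\in[F_k^{-},F_k^{+}]$ and introduce the continuous map $G:\mathcal{D}\to\R^m$ defined by $G(x)=F(x)-F^{*}$. A point $x\in\mathcal{D}$ with $F(x)=F^{*}$ is precisely a zero of $G$, so the whole assertion reduces to producing a zero of $G$ inside the box $\mathcal{D}$.

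The key step is to verify the Poincar\'e--Miranda boundary sign conditions for $G$, and this is exactly where the monotonicity hypothesis is used. Consider the lower face $\{x\in\mathcal{D}:x_k=a_k\}$. There every coordinate satisfies $x_\ell\le b_\ell$, so the fact that $F_k$ increases in each of its arguments gives $F_k(x)\le F_k(b_1,\dots,b_{k-1},a_k,b_{k+1},\dots,b_m)=F_k^{-}\le F_k^{*}$, whence $G_k(x)\le 0$ on this face. Symmetrically, on the upper face $\{x_k=b_k\}$ one has $x_\ell\ge a_\ell$ for all $\ell$, so $F_k(x)\ge F_k(a_1,\dots,a_{k-1},b_k,a_{k+1},\dots,a_m)=F_k^{+}\ge F_k^{*}$ and hence $G_k(x)\ge 0$ there. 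Thus for each $k$ the component $G_k$ is nonpositive on the face $x_k=a_k$ and nonnegative on the opposite face $x_k=b_k$, which are precisely the hypotheses of the Poincar\'e--Miranda theorem on $\mathcal{D}$. The theorem therefore yields a point $x\in\mathcal{D}$ with $G(x)=0$, i.e. $F(x)=F^{*}$, completing the argument. The strict inequalities $F_k^{-}<F_k^{+}$ are needed only to ensure that the admissible target set $\Pi_{k=1}^m[F_k^{-},F_k^{+}]$ is a nondegenerate box; the existence of a zero uses only $F_k^{-}\le F_k^{*}\le F_k^{+}$.

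I expect the main obstacle to be the topological input itself: the Poincar\'e--Miranda theorem is not elementary, being a repackaging of Brouwer's fixed point theorem (equivalently, a statement about the topological degree of $G$ being nonzero, which the boundary sign conditions guarantee). Once that black box is accepted, the rest of the proof is the routine monotonicity bookkeeping above. If one preferred a self-contained argument avoiding this citation, the natural alternative would be induction on $m$: for fixed $x_2,\dots,x_m$ one solves $F_1(\cdot,x_2,\dots,x_m)=F_1^{*}$ by the one-dimensional intermediate value theorem and then feeds the result into a reduced $(m-1)$-dimensional problem. The delicate point there is the continuity and well-definedness of the implicit solution together with the propagation of the diagonal-dominance conditions $F_k^{-}<F_k^{+}$ to the reduced system; the Poincar\'e--Miranda route sidesteps exactly this difficulty, so I would take it.
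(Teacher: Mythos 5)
Your argument is correct. One point of orientation first: the paper you were given does not prove this lemma at all --- it is imported verbatim from \cite[Lemma~3.5]{HKP97} as a black box and then applied in the proof of Theorem~\ref{th3} --- so there is no in-paper proof to compare against, and any proof you supply is necessarily ``your own route.'' Your route is sound: the reduction to a zero of $G(x)=F(x)-F^*$ is the natural one, and the monotonicity bookkeeping is exactly right --- on the face $x_k=a_k$ every other coordinate is $\le b_\ell$, so $F_k(x)\le F_k(b_1,\dots,a_k,\dots,b_m)=F_k^-\le F_k^*$, and symmetrically $G_k\ge 0$ on $x_k=b_k$. These are precisely the Poincar\'e--Miranda hypotheses, and the theorem delivers the zero. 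Two minor remarks. First, since $F_k^*$ may equal $F_k^-$ or $F_k^+$, your sign conditions on the faces are non-strict; you should therefore either cite the non-strict version of Poincar\'e--Miranda explicitly or add the standard one-line compactness fix (perturb $G$ by $\delta\left(\frac{2x_k-a_k-b_k}{b_k-a_k}\right)_{k}$ to make the face inequalities strict, extract zeros $x_\delta$, and pass to a convergent subsequence as $\delta\to 0$). Second, your closing observation is accurate and worth keeping: the strict inequalities $F_k^-<F_k^+$ play no role in the existence argument, only in making the target box nondegenerate, and the naive inductive alternative via the one-dimensional intermediate value theorem does indeed stumble on the continuity and uniqueness of the implicitly defined solution when the monotonicity is not strict (the solution set in each scalar equation may be a nontrivial interval, so one needs a continuous selection); choosing the degree-theoretic route to sidestep that selection problem is a legitimate and arguably cleaner design decision than patching the induction.
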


\begin{proof}[Proof of Theorem~\ref{th3}]
Let $\delta>0$ and  $\mathcal{D}:=\Pi_{k=1}^m[\wt\al_k-\delta,\wt\al_k+\delta]$, where 
$\wt\al_1,\dots,\wt\al_m$ be defined by \eqref{al:be:ga}.
We assume that $\delta$ is so small that 
\begin{gather}\label{Amono+}
\forall\al\in\mathcal{D}:\quad \al_j\not=0,\ j = 1, \dots , m \text{\quad and\quad }
A_j[\al,\wt\be,\wt\ga] < A_{j+1}[\al,\wt\be,\wt\ga],\ j = 1, \dots , m - 1.
\end{gather}
This could be indeed achieved since \eqref{Amono+} holds for $\al=\wt\al$. Thus  Theorem~\ref{th1} is applicable for each $\al\in\mathcal{D}$. 
Also, analyzing the proof of Theorem~\ref{th1}, it is easy to see that the constants
$\eps_0,\,C_0$ in Theorem~\ref{th1} can be chosen \emph{the same for  all} $\al\in\mathcal{D}$;
the proof of this fact relies on the compactness of $\mathcal{D}$.
Hence   there exists  
$\eps_0>0$ and $C_0>0$ such that  
\begin{gather}
\label{spectrum:structure}
\forall\eps<\eps_0\quad \forall \al\in\mathcal{D}:\qquad
\sigma(\H\e[\al,\wt\be,\wt\ga])\cap (-\infty,\Lambda_0\eps^{-1}]=
[B_{0,\eps},\Lambda_0\eps^{-1}]\setminus\cupl_{j=1}^m (A_{j,\eps},B_{j,\eps}),
\end{gather}
where $A_{j,\eps}$, $B_{j,\eps}$ satisfy \eqref{th1:inter} and \eqref{th1:estim} (with $A_j[\al,\wt\be,\wt\ga]$, $A_j[\al,\wt\be,\wt\ga]$ instead of $A_j$ and $B_j$).
Further, for 
these  $A_{j,\eps}$, $B_{j,\eps}$ we will use the notations $A_{j,\eps}[\al,\wt\be,\wt\ga]$, $B_{j,\eps}[\al,\wt\be,\wt\ga]$, respectively.
We denote 
$$\al_j^\pm:=\wt\al_j\pm\delta,\quad\al^\pm:=(\al_{1}^\mp,\al_{2}^\mp,\dots,\al_{j-1}^\mp,\al_j^\pm,\al_{j+1}^\mp,\dots,
\al_{m-1}^\mp,\al_{m}^\mp),\quad 
A_{j,\eps}^\pm:=A_{j,\eps}[\al^\pm,\wt\be,\wt\ga].
$$
It is easy to see that there exists such $\wt\eps\in(0,\eps_0]$  that 
\begin{gather}\label{HKP}
\forall\eps<\wt\eps :\quad  
A_{j,\eps}^-< \wt A_{j}< A_{j,\eps}^+,\ j=1,\dots,m.
\end{gather}
Indeed,  
since $\al_j^-<\wt\al_j<\al_j^+$ and $\wt A_j\overset{\text{Th.~\ref{th2}}}{=}A_j[\wt\al,\wt\be,\wt\ga]=\wt\al_j\wt\be_j^2 N_jl_j^{-1}$, then 
\begin{gather}\label{HKP2}
\forall j=1,\dots,m:\quad
A_{j}^-< \wt A_{j}< A_{j}^+,
\end{gather}
where $A_j^\pm:=A_j[\al^\pm,\wt\be,\wt\ga]=\al_j^\pm\wt\be_j^2 N_jl_j^{-1}$.
Moreover for $\eps<\eps_0$ we have 
\begin{gather}\label{HKP1}
0\leq A_j^\pm-A_{j,\eps}^\pm\le C_0\eps^{1/2}.
\end{gather}
Property \eqref{HKP} follows immediately from \eqref{HKP2}-\eqref{HKP1}.

Now, let us fix $\eps\in(0,\wt\eps]$.
We introduce  the  function $F=(F_1,\dots,F_m):\mathcal{D}\to \mathbb{R}^m$ by 
\begin{gather}\label{F}
\al\overset{F_k}\longmapsto A_{k,\eps}[\al,\wt\be,\wt\ga],\quad k=1,\dots,m.
\end{gather}
The functions $F_k$ are continuous. Indeed, let $\al,\al'\in\mathcal{D}$. 
To simplify the presentation we assume that $\al_j,\al'_j,\wt\gamma\ge 0$ (and consequently  $\H\e[\al,\wt\be,\wt\ga]\ge 0$, $\H\e[\al',\wt\be,\wt\ga]\ge 0$); general case need   slight modifications.
By virtue of \eqref{FRT} one has for $f,g\in\L(\Gamma)$,
\begin{multline}\label{res:diff}
\left((\H\e[\al,\wt\be,\wt\ga]+\mathrm{I})^{-1}f-(\H\e[\al',\wt\be,\wt\ga]+\mathrm{I})^{-1}f,g\right)_{\L(\Gamma)}=\h\e[\al',\wt\be,\wt\ga]\la u,w\ra-\h\e[\al,\wt\be,\wt\ga]\la u,w\ra\\=
\suml_{i\in\Z^n}\suml_{j=1}^m\suml_{v\in\V_{ij}}(\al'_j-\al_j)(u_0(v)-\wt\be_j u_j(v))
\overline{(w_0(v)-\wt\be_j w_j(v))},
\end{multline}
where $u= (\H\e[\al,\wt\be,\wt\ga]+\mathrm{I} )^{-1}f$, $w=(\H\e[\al',\wt\be,\wt\ga]+\mathrm{I} )^{-1}g$, $\h\e[\al,\wt\be,\wt\ga]$ is a form associated with $\H\e[\al,\wt\be,\wt\ga]$. Using   \eqref{trace} and taking into account that
$\al_j,\al_j',\wt\ga\ge 0$
we  continue \eqref{res:diff} as follows,
\begin{multline}
\label{res:diff:est}
\left|\left((\H\e[\al,\wt\be,\wt\ga]+\mathrm{I})^{-1}f-(\H\e[\al',\wt\be,\wt\ga]+\mathrm{I})^{-1}f,g\right)_{\L(\Gamma)}\right|\leq 
C|\al-\al'|\|u\|_{\W^1(\Gamma)}\|w\|_{\W^1(\Gamma)}\\\leq 
C\eps^{1/2}|\al-\al'|
\left(\h\e[\al,\wt\be,\wt\ga]\la u, u\ra+\|u\|^2_{\L(\Gamma)}\right)^{1/2}
\left(\h\e[\al',\wt\be,\wt\ga]\la w, w\ra+\|w\|^2_{\L(\Gamma)}\right)^{1/2}\\=
C\eps^{1/2}|\al-\al'|\sqrt{(f,u)_{\L(Y)}(g,w)_{\L(Y)}}\leq 
C\eps^{1/2}|\al-\al'|\|f\|_{\L(Y)}\|g\|_{\L(Y)},
\end{multline}
where $C>0$ is a constant.
It follows from \eqref{res:diff:est} that 
$$
\left\|
(\H\e[\al,\wt\be,\wt\ga]+\mathrm{I})^{-1}-
(\H\e[\al',\wt\be,\wt\ga]+\mathrm{I})^{-1}
\right\|\to 0
\text{ as }\al-\al'\to 0,
$$
whence for an arbitrary compact set $\mathcal{I}\subset\R$ one has
\begin{gather}\label{Hausdorff}
\mathrm{dist}_{\rm H}(\sigma(\H\e[\al,\wt\be,\wt\ga])\cap\mathcal{I},\sigma(\H\e[\al',\wt\be,\wt\ga])\cap\mathcal{I})\to 0\text{ as }\al-\al'\to 0,
\end{gather}
where $\mathrm{dist}_{\rm H}(\cdot,\cdot)$ stands for the Hausdorff distance.
Taking into account a special structure of $\sigma(\H\e[\al,\wt\be,\wt\ga])$   \eqref{spectrum:structure}  we conclude from \eqref{Hausdorff} that
$A_{k,\eps}[\al',\wt\be,\wt\ga]- A_{k,\eps}[\al,\wt\be,\wt\ga]\to 0\text{ as }\al-\al'\to 0,$ i.e.
  $F_k$ is continuous.
It is clear that $A_{k,\eps}[\al,\wt\be,\wt\ga]$ is the right endpoint  of 
the $k$th spectral band:
\begin{gather}\label{max1}
A_{k,\eps}[\al,\wt\be,\wt\ga]=\max_{\theta\in\mathbb{T}^n}\lambda_k(\H\e^\theta[\al,\wt\be,\wt\ga]),
\end{gather}
where $\H\e^\theta[\al,\wt\be,\wt\ga]$ denotes the operator $\H\e^\theta$ with $\be_j=\wt\be_j$, $\ga=\wt\ga$.
Since $\H\e^\theta[\al,\wt\be,\wt\ga]\leq \H\e^\theta[\al',\wt\be,\wt\ga]$ (in the form sense) as $\al_j\le \al'_j$, $\forall  j=1,\dots,m$, by min-max principle
we conclude for $k=1,\dots,m$:
\begin{gather}
\label{la:la}
\forall\theta\in\mathbb{T}^n:\
\lambda_k(\H\e^\theta[\al,\wt\be,\wt\ga])\leq \lambda_k(\H\e^\theta[\al',\wt\be,\wt\ga])\text{ provided }\al_j\le \al'_j,\ \forall j=1,\dots,m.
\end{gather}
It follows from \eqref{max1}-\eqref{la:la} that the functions $F_k$ increase monotonically in each of their arguments. 
Taking into account \eqref{HKP} we infer that the function $F$ satisfy all the requirements
of Lemma~\ref{lemma:HKP}. Applying this lemma we conclude that there exists such $\al\in\mathcal{D}$ that
\begin{gather}\label{f=A}
F_k(\al)=\wt A_k,\ k=1,\dots,m.
\end{gather}
Combining \eqref{spectrum:structure},   \eqref{F}, \eqref{f=A} we arrive at the statement of Theorem~\ref{th3}. 
\end{proof}

\begin{remark} 
The assumption $\A_j\not=0$, $j=1,\dots,m$ in \eqref{interlace+} is  essential -- one cannot avoid it when using 
the Hamiltonians $\H\e$ introduced in Subsection~\ref{subsec13}, since the numbers $A_j$   \eqref{Aj} are always non-zero. To overcome this restriction one can add to $\H\e$ a constant potential, which shift the spectrum accordingly. Another option is to
to pick in each $Y_j$, $j=0,\dots,m$ an internal point $\widehat v_j$, and then to add at $\widehat v_j$ the $\delta$-coupling of the strength $\widehat\gamma\, l_{j}$, where $l_j$ is defined by \eqref{lj} and  $\widehat\gamma\in\mathbb{R}$. Denote by $\widehat{\mathcal{H}}\e$ the modified Hamiltonian.
Repeating verbatim the arguments we use in the proof of Theorem~\ref{th1} one can show that the spectrum of $\widehat{\mathcal{H}}\e$ satisfies  \eqref{th1:spec}-\eqref{th1:estim}, but with $A_{j}+\widehat\gamma$ and $B_{j}+\widehat\gamma$
instead of $A_{j}$ and $B_j$.
\end{remark}

\section*{Acknowledgment}

The author is supported by  Austrian Science Fund (FWF) under the Project M~2310-N32.

\end{document}